\documentclass{article}

\usepackage{algorithmic}
\usepackage{algorithm}
\usepackage{amsmath}
\usepackage{amsthm}
\usepackage{amssymb}
\usepackage{times, mathptmx}

\usepackage[latin1]{inputenc}

\newcommand{\MORPH}{\mathcal{S}}
\newcommand{\QNUM}{D}

\newcommand{\HYBINP}{\mathcal{U}}

\newcommand{\COL}{\mathbf{C}}

\newcommand{\LSS}{DTLSS}
\newcommand{\BLSS}{DTLSS\ }
\newcommand{\SLSS}{DTLSSs}
\newcommand{\BSLSS}{DTLSSs\ }
\newcommand{\GCR}{\textbf{GCR}}

\newcommand{\RFSF}{FFS\ }
\newcommand{\NFSF}{FFS}

\newcommand{\POWO}{d}

\bibliographystyle{plain}

\newcommand{\WR}{span-reachable}

\newcommand{\SPAN}{\mathrm{Span}}



\newcommand{\SwitchSysLin}[1][{}]{ (p,m,n^{#1},Q,\{(A_{q}^{#1},B_{q}^{#1},C_{q}^{#1}) \mid q \in Q \},x_0^{#1})
}

\newcommand{\IM}{\mathrm{Im}}
\newcommand{\Rank}{\mathrm{rank}\mbox{ } }
\newtheorem{Theorem}{Theorem}
\newtheorem{Lemma}{Lemma}
\newtheorem{Definition}{Definition}
\newtheorem{Remark}{Remark}
\newtheorem{Notation}{Notation}
\newtheorem{Example}{Example}

\newtheorem{Procedure}{Procedure}

\renewcommand{\paragraph}[1]{\smallskip\noindent\textbf{#1.} }
\renewcommand{\Re}{\mathbb{R}}

\title{ Realization theory of discrete-time linear switched systems}
\date{}
\author{Mih\'aly Petreczky 
 \footnote{
   Univ Lille Nord de France, F-59000 Lille, France, and EMDouai, IA, F-59500 Douai, France,
      \texttt{mihaly.petreczky@mines-douai.fr.}}
  Laurent Bako \footnote{
   Univ Lille Nord de France, F-59000 Lille, France, and EMDouai, IA, F-59500 Douai, France,
      \texttt{laurent.bako@mines-douai.fr.}}
   \ and Jan H. van Schuppen
   \footnote{
   Centrum Wiskunde en Informatica (CWI)
    P.O.Box 94079, 1090GB Amsterdam, The Netherlands
   \texttt{J.H.van.Schuppen@cwi.nl}}
}


\begin{document}
\maketitle

\begin{abstract}
 The paper presents realization theory of discrete-time
 linear switched systems (abbreviated by \SLSS).
 We present necessary and sufficient
 conditions for an input-output map to admit a
 discrete-time linear switched state-space realization.
 In addition, we
 present a characterization of minimality of
 discrete-time linear switched systems in terms of
 reachability and observability.
 Further, we prove that minimal realizations are unique up to
 isomorphism. We also discuss algorithms for converting a
 linear switched system to a minimal one and for constructing
 a state-space representation from input-output data.
 The paper uses the theory of rational formal power series
 in non-commutative variables. 

 \medskip

 \textbf{Keywords: } hybrid systems, switched systems, realization theory, minimal realization.
\end{abstract}

\section{Introduction}
 In this paper we develop realization theory of
 discrete-time linear switched systems (abbreviated by \SLSS).
  \BSLSS are one of the simplest and best studied 
  classes of hybrid systems, \cite{Sun:Book}. A \LSS\ is a
  discrete-time switched system, such that the continuous
  sub-system associated with each discrete state is linear.
  The switching signal is viewed as an external input, and all
  linear systems live on the same input-output- and state-space.

 \textbf{Realization theory.}
  \emph{Realization theory} is one of the central topics of
  system theory. For \SLSS,  the subject of realization theory is to
  answer the following questions.
  \begin{itemize}
  \item
   When is it possible to construct
   a (preferably minimal) \LSS\ state-space representation
  of the specified input/output behavior ?
 \item
   How to characterize  minimal \BSLSS  which generate
   the specified input/output behavior ?
 \end{itemize}

 \textbf{Motivation.}
 While there is a substantial literature on
 linear switched systems, realization theory was addressed only for the
 continuous-time case \cite{MP:BigArticlePartI,MP:RealForm}. 
The motivation for devoting a separate paper to realization theory
of discrete-time \BSLSS is the following.
\begin{enumerate}
\item
\label{Diff1}
 Realization theory for \BSLSS\ is substantially different
 from realization theory for linear systems.
\item
\label{Diff2}
 Realization theory for \BSLSS\ 
 is substantially different from
 the continuous-time case. More precisely,
 the realization problem both for continuous-time linear switched systems
 and for \BSLSS can be transformed to the same realization problem for
 formal power series. The difference lies in the specific transformation.

\item
\label{Diff3}
  Formulating realization theory explicitly for discrete-time \BSLSS
  will be useful the identification of these systems. In fact,
  the results of this paper were already used in \cite{MP:HSCC2010}
  for analyzing identifiability of \BSLSS.
\end{enumerate}
Intuitively, the main difference between linear realization theory and
that of linear switched systems is the following.
For linear switched systems, the realization problem is equivalent 
to the problem of
representing a sequence of numbers (Markov-parameters) 
as products of several non-commuting matrices 
(pre- and post-multiplied by fixed matrices).
For linear case, the corresponding problem involves not products of
non-commuting matrices, but powers of one matrix.
In addition, for linear switched systems
 we allow arbitrary non-zero initial state. 
 The presence of
a non-zero initial state means that the input response and initial-state response have to be decoupled. A similar approach was already described
in \cite{Hof} for linear systems.


 \textbf{Contribution of the paper}
   We prove that span-reachability and observability of
   \BSLSS is equivalent to minimality and that minimal realizations
   are isomorphic. We also show that any \BLSS can be transformed
   to a minimal one while preserving its input-output behavior,
   by presenting a minimization algorithm.
   In addition, we formulate the concept of
   Markov-parameters and Hankel-matrix for \BSLSS. We show that
   an input-output map can be realized by a \LSS\ if and only if
   the Hankel-matrix is of finite rank. We also present a procedure
   for constructing a \LSS\ state-space representation from the
   Hankel-matrix.
	Our main tool is the theory of rational formal power series \cite{Reut:Book,Son:Real}. 

 \textbf{Related work}
  To the best of our knowledge, the results of this paper are new.
  The results on minimality of \BSLSS were already announced
  in \cite{MP:HSCC2010}, but no detailed proof was provided.
  The results on existence of a realization by a \LSS\ were not
  previously published.

  The realization problem for hybrid systems was first formulated in
  \cite{GrossmanHybAlg}.
  In \cite{Paoletti2,Weiland06} the relationship between input-output equations and the state-space representations was studied.
  In 
  \cite{MP:Phd,MPRV:HSCC08,Petreczky09-TAC} realization theory
  for various classes of hybrid systems were developed.
 In particular, realization theory for continuous-time (bi)linear switched
 systems was developed in \cite{MP:BigArticlePartI,MP:RealForm}.
 The approach of the present paper is similar to that of
 \cite{MP:BigArticlePartI}, however the details of the
 steps are different. 
 There is a vast literature on topics related to realization theory, such as
 system identification, observability and reachability of hybrid systems, see
 \cite{IdentHybTut,SchuColObs,Sun:Book,Bako09-SYSID2,Bako08-IJC,VidalAutomatica,JLSVidal,Verdult04,Juloski05-TAC,Roll04,Paoletti1,Ferrari03,Nakada05,Weiland06,Paoletti2}.

 Our main tool for developing realization theory of
 \BSLSS is the theory of rational formal power series.
 This theory was already used for
 realization theory of nonlinear and multi-dimensional systems,
 \cite{MFliessHank,isi:tac,Son:Real,ball:1474}. 
 State-affine systems from \cite{Son:Real} include
 autonomous \BSLSS as a special case. Realization theory
 of state-affine systems is equivalent to that of rational formal power series.
 In this paper we reduce the realization problem for \BSLSS directly to that of
 rational formal power series. Hence, indirectly we also show 
 that the realization problems for
 \BSLSS and state-affine systems are equivalent. 
 One could probably reduce the realization problem for \BSLSS to that of
 state-affine systems directly, however it is unclear if such a reduction
 would be more advantageous. 

\textbf{Outline}
  \S \ref{sect:lin} presents a brief overview of realization 
  theory of discrete-time linear systems.
  \S \ref{sect:switch} presents the formal definition of \BSLSS and it
  formulates the major system-theoretic concepts for this system class.
  \S \ref{sect:main_results:min} -- \S \ref{sect:main_results:ex}
  states the main results of the paper.
  \S \ref{sect:pow} contains the necessary 
  background on the theory of rational formal
  power series. 
  The proofs are presented in \S \ref{sect:proof} and Appendix \ref{appA}.
 
\textbf{Notation}
 Denote by $\mathbb{N}$ the set of natural numbers including $0$.
 The notation described below is standard in automata theory, see \cite{GecsPeak,AutoEilen}.
 Consider a set $X$ which will be called the \emph{alphabet}.
 Denote by $X^{*}$ the set of finite
 sequences of elements of $X$.  
 Finite sequences of elements of
 $X$ are be referred to as \emph{strings} or \emph{words} over $X$.
 Each non-empty word $w$ is of the form $w=a_{1}a_{2} \cdots a_{k}$
 for some $a_1,a_2,\ldots,a_k \in X$.
 The element $a_i$ is called the \emph{$i$th letter of $w$}, for
 $i=1,\ldots,k$ and $k$ is called the \emph{length $w$}.
 We denote by $\epsilon$ the \emph{empty sequence (word)}.
 The length of word $w$ is denoted by $|w|$;note that $|\epsilon|=0$.
 We denote by $X^{+}$ the set
 of non-empty words, i.e. 
 $X^{+}=X^{*}\setminus \{\epsilon\}$.
 We denote by $wv$ the concatenation of word $w \in X^{*}$ with $v \in X^{*}$.
  We use the notation of \cite{JacobAlg1} for matrices indexed by sets
  other than natural numbers. 
For each $j=1,\ldots,m$, $e_j$ is the $j$th unit vector of $\mathbb{R}^{m}$, i.e.
  $e_j=(\delta_{1,j},\ldots, \delta_{n,j})$,
  $\delta_{i,j}$ is the Kronecker symbol. 

 \section{Realization theory for linear systems}
\label{sect:lin}
  In this section we present a brief review of
  realization theory of discrete-time linear systems, based on \cite{Hof}.
  Although the results of this section are not used in the paper,
  they help to get an intuition for
  the results on realization theory of \BSLSS.

  The input-output maps of interest are 
  of the form \( y:(\mathbb{R}^{m})^{+} \rightarrow \mathbb{R}^p\).
  For each sequence $u=u_0\cdots u_t$, $t \ge 0$,
  $y(u)$ is the output of the underlying system
  at time $t$, if inputs $u_0,\ldots,u_t$ are fed.
  It is well-known that for $y$ to be  realizable by a 
  linear system, it must be of the form
  \begin{equation}
  \label{eq:LinearSys-Markov} 
   y(u_0\cdots u_t) = K_t+\sum_{j=0}^{t-1} H_{t-j-1}u_{j} 
  \end{equation}
  for some matrices $K_k \in \mathbb{R}^p$, $H_k \in \mathbb{R}^{p \times m}$, $k=0,1,2,\ldots,$ and for any sequence of inputs
   $u_0,\ldots,u_t \in \mathbb{R}^m$.
 Consider a discrete-time linear system
  \begin{equation}
  \label{rev2}
   \Sigma\left\{
     \begin{split}
      & x_{t+1} = Ax_t+Bu_t \mbox{ where } x_0 \mbox{ is fixed } \\
      & y_t =Cx_t
     \end{split}\right.
  \end{equation}
  where $A$, $B$ and $C$ are $n \times n$, $n \times m$ and
  $p \times n$ real matrices and $x_0 \in \mathbb{R}^n$
  is the initial state.
  Note that the initial
  state is $x_0$, and $x_0$ need not be zero.
  The map
  $y$ is said to be \emph{realized by} $\Sigma$,
  if the output response of
  $\Sigma$ to any input $u$ equals $y(u)$. This is the case
  if and only if $y$ is of the form \eqref{eq:LinearSys-Markov}, and
  $K_t=CA^tx_0$, $H_t=CA^tB$, $t \ge 0$. We call $\Sigma$ a \emph{minimal
  realization of $y$}, if it has the smallest state-space dimension among
  all the linear system realizations of $y$.
  \begin{Theorem}[\cite{Hof}]
  \label{lin:theo:min}
   Assume that $\Sigma$ is a linear system realization of $y$.
   Then $\Sigma$ is a minimal realization of $y$, if and only
   if it is weak-reachable and observable.
   Recall that $\Sigma$ is weak-reachable if and only if
 $(A,\begin{bmatrix} x_0 & B \end{bmatrix})$ is a reachable pair.
   All minimal realizations of $y$ are isomorphic and any realization
   of $y$ can be transformed to a minimal one.
  \end{Theorem}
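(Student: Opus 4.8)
The plan is to reduce the statement to the classical Kalman realization theory for linear systems with zero initial state, by absorbing the initial state into the input matrix. First I would set $\tilde{B} = \begin{bmatrix} x_0 & B \end{bmatrix} \in \mathbb{R}^{n \times (m+1)}$ and note that the identities $K_t = CA^t x_0$, $H_t = CA^t B$ can be packaged into the single matrix sequence $N_t := C A^t \tilde{B} \in \mathbb{R}^{p \times (m+1)}$, $t \ge 0$, whose first column is $K_t$ and whose remaining $m$ columns form $H_t$. Since by \eqref{eq:LinearSys-Markov} the map $y$ is determined by, and determines, the pair of sequences $(K_t)_{t \ge 0}$ and $(H_t)_{t \ge 0}$, two linear systems $\Sigma_i = (A_i, B_i, C_i, x_{0,i})$, $i = 1, 2$, realize the same $y$ if and only if $C_1 A_1^t \tilde{B}_1 = C_2 A_2^t \tilde{B}_2$ for all $t \ge 0$; equivalently, the triples $(A_i, \tilde{B}_i, C_i)$ are realizations, in the usual sense with zero initial state, of one and the same Markov sequence $N = (N_t)_{t \ge 0}$.

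Next I would invoke the classical theory. The assignment $(A, B, C, x_0) \mapsto (A, \tilde{B}, C)$ is a dimension-preserving bijection between the linear-system realizations of $y$ and the linear realizations of $N$, so $\Sigma$ is a minimal realization of $y$ exactly when $(A, \tilde{B}, C)$ is a minimal realization of $N$. By the Kalman realization theorem the latter holds if and only if $(A, \tilde{B})$ is a reachable pair and $(A, C)$ is an observable pair. But reachability of $(A, \tilde{B}) = (A, \begin{bmatrix} x_0 & B \end{bmatrix})$ is precisely the weak-reachability of $\Sigma$, and observability of $(A, C)$ is precisely the observability of $\Sigma$. This proves the minimality characterization.

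For uniqueness I would transport the classical results through the same bijection. If $\Sigma_1, \Sigma_2$ are minimal realizations of $y$, then $(A_i, \tilde{B}_i, C_i)$ are minimal realizations of $N$, hence there is an invertible $T$ with $A_2 = T A_1 T^{-1}$, $\tilde{B}_2 = T \tilde{B}_1$, $C_2 = C_1 T^{-1}$; comparing block columns in $\tilde{B}_2 = T \tilde{B}_1$ gives $x_{0,2} = T x_{0,1}$ and $B_2 = T B_1$, i.e. $T$ is an isomorphism of linear systems in the required sense. Likewise, for an arbitrary realization $\Sigma$ of $y$, the Kalman reachability/observability reduction applied to $(A, \tilde{B}, C)$ yields a minimal realization of $N$; since that reduction acts on $\tilde{B}$ by left multiplication, hence column-wise, the reduced extended input matrix retains the block form $\begin{bmatrix} x_0' & B' \end{bmatrix}$, so pulling it back gives a minimal realization of $y$ with the same input-output map.

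The only genuine subtlety, and the step I would be most careful about, is the bookkeeping around the initial state: one must check that the block-column decomposition $\tilde{B} = \begin{bmatrix} x_0 & B \end{bmatrix}$ is preserved both by similarity transformations and by the Kalman reduction (it is, since both act on $\tilde{B}$ by left multiplication), and that ``minimal'' in the sense of smallest state dimension among realizations of $y$ agrees with ``minimal'' for the Markov sequence $N$ (it does, because the bijection above preserves dimension). Everything else is a direct appeal to the linear realization theory underlying \cite{Hof}.
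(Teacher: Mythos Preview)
Your argument is correct. The reduction via $\tilde{B}=\begin{bmatrix} x_0 & B\end{bmatrix}$ does exactly what is needed: it turns the realization problem with nonzero initial state into the standard zero-initial-state problem for the Markov sequence $M_t=CA^t\tilde{B}$, and the block-column bookkeeping under similarity and under Kalman reduction is handled properly.

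As for comparison: the paper does not actually give a self-contained proof of this theorem; it is quoted from \cite{Hof} as background. The paper does, however, observe later that the result is the $|Q|=1$ special case of its main minimality theorem for \SLSS\ (Theorem~\ref{theo:min}), whose proof goes through the formal-power-series machinery of \S\ref{sect:pow}. Your route is the more elementary one: instead of invoking the full rational-representation framework and then specializing, you go straight to classical Kalman theory. Interestingly, the key device is the same in both approaches---the paper's span-reachability matrix is built from exactly your $\tilde{B}$ (see the definition of $\widetilde{B}$ in Theorem~\ref{sect:real:lemma1})---so your proof can be read as the linear-case shortcut of the paper's general argument, bypassing the formal power series.
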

  The transformation to a minimal system can be carried out
  by first transforming the linear system to a weak-reachable one,
  and then to an observable one, \cite{Hof}.

  Next, we formulate conditions for existence of a linear system realization 
  of $y$. 
  To this end, we assume that $y$ is of the form \eqref{eq:LinearSys-Markov}.
  This assumption is necessary (but not sufficient) for existence of 
  a realization. We call the matrices  $M_t=\begin{bmatrix} K_t & H_t \end{bmatrix}$, $t \ge 0$ \emph{Markov parameters}. This terminology is slightly
  different from the one used in \cite{Hof}.
  Note that $y$  is completely determined by the 
  Markov-parameters $\{M_{t}\}_{t=0}^{\infty}$.
  In addition, note that
  we defined the Markov-parameters without assuming
  the existence of a linear system realization.
  In fact, we use the Markov-parameters  
  for characterizing the existence of a linear system realization. 
  More precisely, 
  we define the infinite block \emph{Hankel-matrix $H_y$ of $y$} as
  follows $H_y=(H_{i,j})_{i,j=1}^{\infty}$, $H_{i,j}=M_{i+j-2}$,  i.e. the entries of
  $H_y$ are formed by the entries of the Markov-parameters of $y$.
  \begin{Theorem}[\cite{Hof}]
  \label{lin:theo:ex}
   The map
   $y$ can be realized by a linear system if and only if
  the rank of $H_y$ is finite. If $\Rank H_y =n < +\infty$, 
  then a minimal linear system realization $\Sigma$ of $y$ 
  can be constructed from the columns of
  $H_y$.
  In particular, this means that
  $\Rank H_y$ equals the dimension of any minimal linear system which
  is a  realization of $y$.  
  \end{Theorem}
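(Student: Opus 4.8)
The plan is to follow the classical shift-realization argument of \cite{Hof}: factor the Hankel matrix through an arbitrary realization to obtain necessity of finite rank together with a lower bound on the state dimension, and, conversely, when $\Rank H_y$ is finite, build a realization of dimension exactly $\Rank H_y$ directly from the columns of $H_y$, the initial state being absorbed into the input matrix.

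\emph{Necessity and the dimension bound.} First I would suppose that $\Sigma$ as in \eqref{rev2}, of state dimension $n$, realizes $y$, so that $M_t = CA^t\begin{bmatrix} x_0 & B \end{bmatrix}$ for all $t \ge 0$. Put $\bar B := \begin{bmatrix} x_0 & B \end{bmatrix} \in \mathbb{R}^{n \times (m+1)}$. Then the $(i,j)$ block of $H_y$ is $M_{i+j-2} = (CA^{i-1})(A^{j-1}\bar B)$, so $H_y = \mathcal{O}\mathcal{R}$, where $\mathcal{O}$ has block rows $CA^{i-1}$ ($i \ge 1$) and $\mathcal{R}$ has block columns $A^{j-1}\bar B$ ($j \ge 1$); both factors pass through $\mathbb{R}^n$, whence $\Rank H_y \le n$. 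Thus finiteness of $\Rank H_y$ is necessary, and $\Rank H_y \le \dim\Sigma$ for every realization $\Sigma$ of $y$.

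\emph{Construction.} Now assume $\Rank H_y = n < \infty$ and let $W$ be the column span of $H_y$, viewed inside the space of sequences $(v_1, v_2, \dots)$ with $v_i \in \mathbb{R}^p$; then $\dim W = n$. The identity $H_{i,j} = M_{i+j-2}$ shows that deleting the first block column of $H_y$ is the same as shifting every block row up by one, so the ``shift-up'' map $\sigma\colon (v_1, v_2, v_3, \dots) \mapsto (v_2, v_3, \dots)$ sends each column of $H_y$ to another column of $H_y$ and hence restricts to a linear map $A := \sigma|_W\colon W \to W$. Define $C\colon W \to \mathbb{R}^p$ by $Cv := v_1$, and $\bar B\colon \mathbb{R}^{m+1} \to W$ by letting $\bar B e_k$ be the $k$-th column of the first block column of $H_y$ (which lies in $W$). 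Unwinding the definitions, the $i$-th block of $A^t \bar B e_k$ is the $k$-th column of $M_{i+t-1}$, so the first block of $A^t \bar B e_k$ is the $k$-th column of $M_t$; that is, $CA^t \bar B = M_t$ for all $t \ge 0$. Setting $x_0 := \bar B e_1$ and $B := \bar B\begin{bmatrix} e_2 & \cdots & e_{m+1}\end{bmatrix}$ gives $CA^t x_0 = K_t$ and $CA^t B = H_t$, so $\Sigma = (A, B, C, x_0)$, of dimension $n$, realizes $y$ (recall that $y$ is completely determined by its Markov parameters via \eqref{eq:LinearSys-Markov}). Choosing as a basis of $W$ any $n$ linearly independent columns of $H_y$ turns $A, B, C, x_0$ into explicit matrices assembled from the columns of $H_y$. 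The same bookkeeping shows $\sum_{j \ge 1} A^{j-1}\IM\bar B = W$ and $\bigcap_{j \ge 0}\ker(CA^j) = \{0\}$, so this $\Sigma$ is in fact weak-reachable and observable.

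\emph{Conclusion and the main difficulty.} Combining the two parts, every realization of $y$ has dimension at least $\Rank H_y$, and the construction produces one of dimension exactly $\Rank H_y$; hence $\Rank H_y$ is the smallest achievable state dimension and the $\Sigma$ constructed above is minimal. Since the result is classical I do not anticipate a genuine obstacle; the only point that needs care is the block bookkeeping identifying the column-shift on $H_y$ with the up-shift on block rows, and keeping in mind that it is precisely the finiteness of $\Rank H_y$ that makes $W$ an admissible (finite-dimensional) state space.
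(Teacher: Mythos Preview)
Your argument is correct and is precisely the classical shift-realization proof from \cite{Hof}: factor $H_y$ through any realization to get the rank bound, then build the state space as the column span of $H_y$ with the block-row shift as $A$.

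The paper does not give an independent proof of this theorem; it is quoted from \cite{Hof} as background. What the paper does instead is observe, after Theorem~\ref{sect:real:theo2}, that the linear case is the specialization $Q=\{1\}$ of the general \LSS\ result, whose proof passes through the theory of rational formal power series (Theorems~\ref{sect:form:theo1} and~\ref{sect:real:theo1}). In that route, the Hankel matrix $H_y$ is identified with $H_{\Psi_f}$, and the realization is obtained from Procedure~\ref{reprfromhankel} applied to $\Psi_f$. If you unwind Procedure~\ref{reprfromhankel} for a single letter alphabet, you recover exactly your construction: $A_\sigma$ is the column-shift on $\IM H_{\Psi}$, $C$ reads off the $(\epsilon,\cdot)$ rows, and the $B_j$ are the $(\epsilon,j)$ columns. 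So the two approaches coincide at the level of the actual construction; the difference is only in packaging. Your direct proof is shorter and self-contained for the linear case, while the paper's detour through formal power series is what allows the same argument to scale to arbitrary $Q$ without change.
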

  \begin{Procedure}
  \label{lin:hank}
 The construction of $\Sigma$ from the columns of $H_y$ is as follows.
  Fix a finite basis in the column space of $H_y$. Then $x_0$ is
  formed by the coordinates of the first column of $H_y$ in this basis,
  the $r$th column of the matrix $B$ represents the coordinates of the $r+1$th
  column of $H_y$ in this basis. The matrix $C$ is the matrix 
  (in the fixed basis)
  of the linear map which maps each column to the vector formed by
  its first $p$ entries. Finally, $A$ is the matrix (in the fixed 
  basis) of the linear map which maps the $j$th column to the $j+(m+1)$th column, i.e.
  it maps the block column $(M_{i+j-2})_{i=1}^{\infty}$ to the
  block column $(M_{i+j-1})_{i=1}^{\infty}$.
 \end{Procedure}

\section{Linear switched systems}
\label{sect:switch}
 In this section we present the formal definition of \BSLSS along with
 a number of relevant system-theoretic concepts for \BSLSS.
 \begin{Definition}
  \label{switch:def}
  Recall from \cite{MP:HSCC2010} that a discrete-time
  linear switched system (abbreviated by \LSS), is a discrete-time
  control system of the form
  \begin{equation}
  \label{lin_switch0}
  \Sigma\left\{
  \begin{array}{lcl}
   x_{t+1} &=& A_{q_t}x_t+B_{q_t}u_t \mbox{ and $x_0$ is fixed}  \\
   y_t  &=& C_{q_t}x_t. 
  \end{array}\right.
  \end{equation}
  Here $Q=\{1,\ldots,\QNUM\}$ is the finite set of discrete modes, 
  $\QNUM$ is a positive integer. For each $t \in \mathbb{N}$,
 $q_t \in Q$ is the
  discrete mode, $u_t \in \mathbb{R}$ is the continuous input,
  $y_t \in \mathbb{R}^{p}$ is the output at time $t$. Moreover, 
  $A_{q} \in \mathbb{R}^{n \times n}$,
  $B_{q} \in \mathbb{R}^{n \times m}$, $C_q \in \mathbb{R}^{p \times n}$
  are the matrices of the linear system in mode $q \in Q$,  and $x_0$
  is the initial continuous state. 
    We will use
    \[ \SwitchSysLin \] as a
short-hand notation for \SLSS\  of the form (\ref{lin_switch0}).
\end{Definition}
 Throughout the section, \emph{$\Sigma$ denotes a \LSS\ of the form (\ref{lin_switch0}).}
The \emph{inputs of $\Sigma$ are the
continuous inputs $\{u_t\}_{t=0}^{\infty}$ and the
switching signal $\{q_t\}_{t=0}^{\infty}$.} The state of the system
at time $t$ is $x_t$.
Note \emph{that any switching
signal is admissible.}
We use the following notation for the inputs of $\Sigma$.
\begin{Notation}[Hybrid inputs]
 Denote $\HYBINP=Q \times \mathbb{R}^{m}$.
\end{Notation}
 We denote by  $\HYBINP^{*}$ (resp. $\HYBINP^{+}$) 
 the set of all 
  finite 
 (resp. non-empty and finite)
sequences
 of elements of $\HYBINP$.
 A sequence 
 \begin{equation}
 \label{inp_seq}
 w=(q_0,u_0)\cdots (q_t,u_t) \in \HYBINP^{+} \mbox{, } t \ge 0 
 \end{equation}
  describes the scenario, when the discrete mode $q_i$ and
  the continuous input $u_i$ are fed to $\Sigma$ at
 time $i$, for $i=0,\ldots,t$.
    \begin{Definition}[State and output]
     Consider a state $x_{init} \in \mathbb{R}^{n}$.
     For any $w \in \HYBINP^{+}$ of the form (\ref{inp_seq}),
     denote by \( x_{\Sigma}(x_{init},w) \)
     \emph{the state of $\Sigma$}
     at time $t+1$, and denote by
     $y_{\Sigma}(x_{init},w)$ the \emph{output} of $\Sigma$
     at time $t$, if $\Sigma$ is started from $x_{init}$ and 
     the inputs $\{u_i\}_{i=0}^t$ and
     the discrete modes $\{q_i\}_{i=0}^{t}$ are fed to the system. 
   For notational purposes, 
     we define 
     $x_{\Sigma}(x_{init},\epsilon)=x_{init}$.
  \end{Definition}    
     That is,
     $x_{\Sigma}(x_{init},w)$ is defined recursively as follows;
     $x_{\Sigma}(x_{init},\epsilon)=x_{init}$,  and if $w=v(q,u)$
     for some $(q,u) \in \HYBINP$, $v \in \HYBINP^{*}$, then
     \[ x_{\Sigma}(x_{init},w)=A_{q}x_{\Sigma}(x_{init},v)+B_{q}u. \]
     If $w \in \HYBINP^{+}$ and
      $w=v(q,u)$, $(q,u) \in \HYBINP$, $v \in \HYBINP^{*}$,
      then 
      \[ y_{\Sigma}(x_{init},w)=C_{q}x_{\Sigma}(x_{init},v). \]
  \begin{Definition}[Input-output map]
   The map  $y_{\Sigma}: \HYBINP^{+} \rightarrow \mathbb{R}^{p}$,
   defined by
   $\forall w \in \HYBINP^{+}: y_{\Sigma}(w)=y(x_0,w)$, is called
   the input-output map of $\Sigma$.
  \end{Definition}   
   That is, the input-output map of $\Sigma$ maps each
   sequence $w \in \HYBINP^{+}$ to the output generated
   by $\Sigma$ under the hybrid input $w$, if started from the initial state $x_0$.
   The definition above implies that the input-output behavior
   of a \BLSS can be formalized as a map
   \begin{equation}
   \label{io_map}
     f:\HYBINP^{+} \rightarrow \mathbb{R}^{p}. 
   \end{equation}
   The value $f(w)$ for $w$ of the form \eqref{inp_seq} represents the output 
   of the underlying black-box system at time $t$, 
   if the continuous inputs $\{u_i\}_{i=0}^{t}$ and the switching sequence 
   $\{q_i\}_{i=0}^t$ are fed to the system.
   This black-box system may or may not admit a description by a \LSS.

   Next, we define
   when a general map $f$ of the form \eqref{io_map} 
   is adequately described by the
   \BLSS $\Sigma$, i.e. when $\Sigma$ is a realization of $f$.
   \begin{Definition}[Realization]
   \label{switch_sys:real:def1}
    The \BLSS $\Sigma$ is a \emph{realization} of
    an input-output map $f$ of the form \eqref{io_map}, if
   $f$ equals the input-output map of $\Sigma$, i.e. $f=y_{\Sigma}$.
   \end{Definition}
   The \emph{reachable set} $Reach(\Sigma)$ of $\Sigma$ 
   is  the set of all states
   which can be reached from the initial state $x_0$ of $\Sigma$, i.e.
  \begin{equation*}
  \label{sect:problem_form:reach0}
    Reach(\Sigma) = \{ x_{\Sigma}(x_{0},w) \in \mathbb{R}^{n}
   \mid  w \in \HYBINP^{*} \} 
   \end{equation*}
  \begin{Definition}[(Span-)Reachability)]
    The \LSS\  $\Sigma$ is \emph{reachable}, if
    $Reach(\Sigma)=\mathbb{R}^n$, and $\Sigma$ is \emph{\WR} if
    $\mathbb{R}^{n}$ is the smallest vector space containing 
    $Reach(\Sigma)$.
  \end{Definition}
   Reachability implies span-reachability but in general
they are not equivalent. 
   \begin{Definition}[Observability]
    The \LSS\  $\Sigma$ is called \emph{observable} if
    for any two states $x_1,x_2 \in \mathbb{R}^n$ of $\Sigma$, 
    \[ 
      (\forall w \in \HYBINP^{+}: y_{\Sigma}(x_1,w)=y_{\Sigma}(x_2,w)) \implies x_1=x_2 \]
   \end{Definition}
   That is, observability means that if we pick any two states of the system, then for \textbf{some} continuous input and switching signal, the resulting
  outputs will be different.
   \begin{Definition}[Dimension]
   \label{switch_sys:dim:def}
    The dimension of $\Sigma$, denoted by $\dim \Sigma$, is 
    the dimension $n$ of its state-space.
   \end{Definition}
 Note that the number of discrete states is fixed, and
 hence it is not included into the definition of dimension.
The reason for this is the following. 
We are interested in realizations of input-output maps, which map continuous inputs and switching signals to continuous outputs. Hence, for all possible \BLSS realizations, the set of discrete modes is fixed.
\begin{Definition}[Minimality]
 Let 
 $f$ be an
 input-output map.
 Then $\Sigma$ is \emph{a minimal realization of $f$}, if
 $\Sigma$ is a realization of $f$, and
 for any \LSS\ 
 $\hat{\Sigma}$ which is a realization 
 of $f$, $\dim \Sigma \le \dim \hat{\Sigma}$.
\end{Definition}

  \begin{Definition}[\BLSS morphism]
  \label{sect:problem_form:lin:morphism}
   Consider a \LSS\  $\Sigma_1$ of the form (\ref{lin_switch0}) and
   a \LSS\  $\Sigma_2$ of the form 
   \[ \Sigma_{2}=\SwitchSysLin[a] \]
    Note that $\Sigma_1$ and $\Sigma_2$ have the same set of
    discrete modes.
    A matrix
    $\MORPH \in \mathbb{R}^{n^a \times n}$
    is said to be a \emph{\LSS\  morphism}
    from $\Sigma_{1}$ to $\Sigma_{2}$,  denoted by
    $\MORPH:\Sigma_{1} \rightarrow \Sigma_{2}$, if 
   \begin{equation*}
   \begin{array}{ll}
    \MORPH x_0=x_0^a, \mbox{ and } &
    \forall q \in Q: 
    A^{a}_{q}\MORPH=\MORPH A_{q}\mbox{,\ \ }  B_{q}^{a}=\MORPH B_{q}
    \mbox{,\ \ }
    C_{q}^{a}\MORPH =C_{q}.
   \end{array}
  \end{equation*}
  The morphism $\MORPH$ is called surjective ( injective ) if
  $\MORPH$ is surjective ( injective ) as a linear map. The morphism
  $\MORPH$ is said to be a \LSS\  isomorphism,
  if it is an isomorphism as a linear map.
  \end{Definition}
 

\section{Main result on minimality}
\label{sect:main_results:min}
 Below we present the main results of the paper on
minimality of \SLSS. In addition, we present a minimization
procedure and rank tests for checking minimality.
In the sequel, \emph{$\Sigma$ denotes a \LSS\ of the form
(\ref{lin_switch0}), and $f$ denotes an input-output map
$f:\HYBINP^{+} \rightarrow \mathbb{R}^{p}$.}
\begin{Theorem}[Minimality]
\label{theo:min}
 \begin{enumerate}
 \item
 A \BLSS realization of $f$ is minimal, if and only if
 it is \WR\ and observable. 
 \item
 All minimal \BLSS realizations of $f$
 are isomorphic. 
 \item
 Every \BLSS realization of $f$
 can be converted to a minimal \BLSS realization of $f$
 (see Procedure \ref{LSSmin} below).
 \end{enumerate}
\end{Theorem}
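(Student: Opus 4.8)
The plan is to reduce the realization theory of a \BLSS to the realization theory of rational formal power series over the free monoid on the alphabet $Q$, and to transport the classical minimality results for power series (Fliess, Sontag) back to \SLSS. First I would associate to the input-output map $f$ a formal power series in non-commuting variables indexed by $Q$: by iterating the state recursion, $y_{\Sigma}(x_0,w)$ for $w=(q_0,u_0)\cdots(q_t,u_t)$ is an affine function of the continuous inputs $u_0,\ldots,u_t$, and the coefficients are products $C_{q_t}A_{q_{t-1}}\cdots A_{q_{j+1}}B_{q_j}$ and $C_{q_t}A_{q_{t-1}}\cdots A_{q_0}x_0$. Collecting these coefficients over all words in $Q^{*}$ defines the Markov parameters of $f$, and hence a family of power series (one ``$x_0$-series'' plus $m$ input-series, or a single series with values in $\mathbb{R}^{p\times(m+1)}$). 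A \BLSS realization of $f$ of dimension $n$ is then exactly a representation of this series of dimension $n$, with $A_q$ playing the role of the letter-matrices; span-reachability of $\Sigma$ corresponds to reachability of the representation, and observability of $\Sigma$ corresponds to observability of the representation. This correspondence, together with its behavior under \BLSS morphisms, is the technical heart and the step I expect to be the main obstacle: one must check carefully that a \BLSS morphism $\MORPH:\Sigma_1\to\Sigma_2$ is precisely a morphism of the associated representations, and conversely that a representation morphism respects the $B_q$, $C_q$, and $x_0$ data — the bookkeeping with the extra input column and the decoupling of the initial-state response from the forced response is where the discrete-time case differs from the continuous-time treatment of \cite{MP:BigArticlePartI}.

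Granting this dictionary, part (1) follows from the power-series analogue: a representation is minimal if and only if it is reachable and observable. For the ``only if'' direction I would argue contrapositively — if $\Sigma$ is not span-reachable, restrict to the (invariant) span of $\mathrm{Reach}(\Sigma)$ to obtain a realization of strictly smaller dimension with the same input-output map; if $\Sigma$ is not observable, factor out the unobservable subspace (which is invariant under all $A_q$ and contained in $\bigcap_q\ker C_q$ in the appropriate iterated sense) to obtain a smaller realization. Hence a minimal realization must be span-reachable and observable. For the ``if'' direction, suppose $\Sigma$ is span-reachable and observable but some realization $\hat\Sigma$ has smaller dimension; applying the reduction just described to $\hat\Sigma$ yields a span-reachable observable realization of dimension $\le\dim\hat\Sigma<\dim\Sigma$, and the rank characterization of the Hankel-type matrix of $f$ (finite rank equal to the dimension of any span-reachable observable realization, the \BLSS counterpart of Theorem~\ref{lin:theo:ex}) gives a contradiction.

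Part (2): if $\Sigma_1,\Sigma_2$ are both minimal, hence both span-reachable and observable, then by the standard construction for power-series representations there is an isomorphism between their associated representations, and by the dictionary above this isomorphism is a \BLSS isomorphism $\MORPH:\Sigma_1\to\Sigma_2$ — concretely, $\MORPH$ is built from the factorization of the Hankel matrix through the reachability and observability maps, and one checks $\MORPH x_0^{(1)}=x_0^{(2)}$, $A_q^{(2)}\MORPH=\MORPH A_q^{(1)}$, $\MORPH B_q^{(1)}=B_q^{(2)}$, $C_q^{(2)}\MORPH=C_q^{(1)}$. Part (3): given any realization $\Sigma$, first restrict to $\SPAN(\mathrm{Reach}(\Sigma))$ — this subspace is $A_q$-invariant for every $q$ and contains each $B_q$ and $x_0$, so the restricted matrices define a span-reachable realization $\Sigma'$ of $f$ with $\dim\Sigma'\le\dim\Sigma$; then quotient $\Sigma'$ by its largest $(\{A_q\}_q$-invariant, output-annihilating) unobservable subspace to obtain an observable realization $\Sigma''$, still a realization of $f$, which by part (1) is minimal. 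This is exactly the two-stage reduction of Procedure~\ref{LSSmin}, and verifying that the quotient of a span-reachable system stays span-reachable (the unobservability quotient does not destroy reachability, since images of the reachability map surject onto the quotient) completes the argument.
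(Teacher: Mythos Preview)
Your proposal is correct and takes essentially the same approach as the paper: reduce the \BLSS realization problem to the classical realization theory of rational formal power series by building the dictionary $\Sigma\leftrightarrow R_{\Sigma}$ (the paper's Theorem~\ref{sect:real:theo1}), verify that span-reachability, observability, and \BLSS morphisms correspond to their representation-theoretic counterparts, and then read off Parts~(1)--(3) from the known power-series results (Theorem~\ref{sect:form:theo1}). The only cosmetic difference is that for the ``if'' direction of Part~(1) you argue via the Hankel-rank invariant, whereas the paper simply invokes the black-box statement that a representation is minimal iff reachable and observable; these are the same argument in different packaging.
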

The proof of Theorem \ref{theo:min} is presented in \S \ref{sect:proof}.
\begin{Remark}
 Note that $\Sigma$ can be minimal, while none of the linear subsystems
 is minimal, see Example \ref{example1} below.
 Since all minimal realizations are isomorphic, it then follows
 that such a \BLSS cannot be transformed to a one where at least one
 subsystem is minimal without loosing input-output behavior.
\end{Remark}
For analogous theorem for continuous-time linear switched systems
see \cite{MP:BigArticlePartI,MP:RealForm}.
Intuitively, the theorem says the following. First, a minimal
\BLSS should not contain states which are not linear
combination of the reachable ones (hence span-reachability).
Second, a minimal \BLSS\ should not contain multiple
states which 
exhibit the same input-output behavior (hence observability).
 Next, we present rank conditions for observability and span-reachability.
 These conditions can be used to test minimality and to formulate
  Procedure \ref{LSSmin}.
 \begin{Notation}
 \label{repr:not1}
   Let $X$ be a finite set, $\mathcal{X}$ be a linear space, 
$A_{\sigma}: \mathcal{X} \rightarrow \mathcal{X},\sigma \in X$ 
   be linear maps and let $w \in X^{*}$.  The linear map $A_w$ on 
   $\mathcal{X}$ is defined as follows.
   If $w=\epsilon$, then $A_{\epsilon}$ is the identity map, i.e
   $A_{\epsilon}x=x$ for all $x \in \mathcal{X}$.
   If  $w=\sigma_1\sigma_2 \cdots \sigma_{k} \in X^{*}$, 
   $\sigma_1, \cdots \sigma_k \in X$, $k > 0$, 
  then 
  \begin{equation} 
  \label{repr:eq2}
  A_{w}=A_{\sigma_{k}}A_{\sigma_{k-1}} \cdots A_{\sigma_{1}}. 
  \end{equation}
   If $\mathcal{X}=\mathbb{R}^n$ for some $n > 0$, then $A_w$ and
   each $A_{\sigma}$, $\sigma \in X$
  can be identified with an $n \times n$ matrix. In this case
  $A_w$ defines a product of matrices.
\end{Notation}
 We denote by $Q^{<n}$ the set $\{w \in Q^{*} \mid |w| < n\}$
of all words $w \in Q^{*}$ of length at most $n-1$.
We denote by $M_n$ the cardinality of $Q^{<n}$ and we fix an
enumeration
 \[ Q^{<n} = \{v_1,\ldots,v_{M_n}\}. \]
We will use the notation defined above to define observability and
reachability matrices for \BSLSS.
\begin{Theorem}
\label{sect:problem_form:reachobs:prop1}
\label{sect:real:lemma1}
 \textbf{Span-Reachability.} 
    Define the \emph{span-reachability matrix} 
    $R(\Sigma)$ of $\Sigma$
    \begin{equation*}
    \begin{split}
       & \mathcal{R}(\Sigma)=\begin{bmatrix}
          A_{v_1}\widetilde{B}, & A_{v_2}\widetilde{B}, & \cdots, &  A_{v_{M_{n}}}\widetilde{B}
        \end{bmatrix} \in \mathbb{R}^{n \times (|Q|m+1)M_n} \mbox{ where } \\
         & \widetilde{B}=\begin{bmatrix} x_0, & B_1, & \cdots, & B_{\QNUM} 
       \end{bmatrix}
    \end{split}
    \end{equation*}
   Then $\Sigma$ is \WR\  if and only if
   $\Rank \mathcal{R}(\Sigma)=n$.

 \textbf{Observability.}
    Define the \emph{observability matrix} $O(\Sigma) \in \mathbb{R}^{p|Q|M_n \times n}$ of $\Sigma$ as
    follows.
    \[  O(\Sigma)=\begin{bmatrix} \widetilde{C}A_{v_1} \\ \widetilde{C}A_{v_2} \\ \vdots \\ \widetilde{C}A_{v_{M_n}} \end{bmatrix} 
  \mbox{ where } \widetilde{C}=\begin{bmatrix} C_1 \\ C_2 \\ \vdots \\ C_{\QNUM} \end{bmatrix} 
    \]
    Then $\Sigma$ is observable if and only if 
    \( \Rank O(\Sigma) = n \).
\end{Theorem}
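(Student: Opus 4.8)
The plan is to reduce both statements to linear-algebraic facts about the reachable and the "co-reachable" (observability) subspaces, using the fact that products of the system matrices along words of length $\ge n$ add nothing new. Throughout, write $\mathcal{R} = \IM \mathcal{R}(\Sigma)$ for the column space of the span-reachability matrix and $\mathcal{O} = \bigcap_{w \in Q^{<n}} \ker (\widetilde{C}A_w)$ for the null space of the observability matrix. The key auxiliary claim, which I will prove first, is a Cayley--Hamilton-type stabilization lemma: the subspace $\SPAN\{A_w \widetilde{B}e \mid w \in Q^{*}, e \text{ a column index}\}$ is already spanned by those $A_w\widetilde{B}e$ with $|w| < n$; dually, $\bigcap_{w \in Q^{*}} \ker(\widetilde{C}A_w) = \mathcal{O}$.

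For the stabilization lemma I would argue as follows. Let $V_k = \SPAN\{ A_w \widetilde{B}e \mid |w| \le k\}$. Then $V_0 \subseteq V_1 \subseteq \cdots$ is an increasing chain of subspaces of $\mathbb{R}^n$. If $V_k = V_{k+1}$ for some $k$, then since $V_{j+1} = V_j + \sum_{q \in Q} A_q V_j$ (each word of length $j+1$ is $q$ prepended to—careful: by Notation~\ref{repr:not1}, appended in the product to—a word of length $j$), an induction gives $V_k = V_{k+j}$ for all $j \ge 0$, hence $V_k = \bigcup_j V_j$. Since $V_0 \subseteq V_1 \subseteq \cdots \subseteq \mathbb{R}^n$ and each strict inclusion raises the dimension by at least one, the chain must stabilize by step $n$, i.e. $V_{n-1} = \bigcup_j V_j$. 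But $\IM \mathcal{R}(\Sigma) = V_{n-1}$ by definition (the columns of $\mathcal{R}(\Sigma)$ are exactly the $A_{v_i}\widetilde{B}$ for $v_i \in Q^{<n}$), while $\bigcup_j V_j = \SPAN\{x_{\Sigma}(x_0,w) \mid w \in \HYBINP^*\}$ — here one expands $x_{\Sigma}(x_0,w)$ via the recursion in the definition of state, obtaining a linear combination of terms $A_{w'}x_0$ and $A_{w'}B_q u$, each of which lies in some $V_j$, and conversely every $A_w\widetilde{B}e$ arises as such a state (or difference of states) for a suitable choice of continuous inputs. The dual statement for $\mathcal{O}$ is proved by the transposed argument on the decreasing chain $W_k = \bigcap_{|w|\le k}\ker(\widetilde{C}A_w)$.

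Granting the lemma, span-reachability is immediate: $\Sigma$ is span-reachable iff $\mathbb{R}^n$ is the smallest subspace containing $Reach(\Sigma)$, i.e. iff $\SPAN(Reach(\Sigma)) = \mathbb{R}^n$; by the lemma this span equals $\IM\mathcal{R}(\Sigma)$, so the condition is $\Rank \mathcal{R}(\Sigma) = n$. For observability, note first that by linearity $y_{\Sigma}(x_1,w) = y_{\Sigma}(x_2,w)$ for all $w \in \HYBINP^+$ iff $y_{\Sigma}(x_1 - x_2, w') = 0$ for all $w' \in \HYBINP^+$, where $y_\Sigma(\cdot,w')$ depends linearly on the starting state. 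Expanding $y_{\Sigma}(z,w')$ using the state/output recursion shows it is a finite $\mathbb{R}$-linear combination of the vectors $C_q A_v z$ over $v \in Q^*$, $q \in Q$, with coefficients built from the continuous inputs; hence $y_\Sigma(z,w')=0$ for all hybrid inputs $w'$ iff $\widetilde{C}A_v z = 0$ for all $v \in Q^*$, which by the lemma is $z \in \mathcal{O} = \ker O(\Sigma)$. Thus $\Sigma$ is observable iff $\ker O(\Sigma) = \{0\}$, i.e. iff $\Rank O(\Sigma) = n$.

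**The main obstacle** I anticipate is bookkeeping the bidirectional passage between the analytic objects ($x_\Sigma(x_0,w)$, $y_\Sigma(x_i,w)$ as $w$ ranges over hybrid inputs $\HYBINP^*$) and the algebraic objects (matrices $A_v\widetilde{B}$, $\widetilde{C}A_v$ indexed by discrete words $v \in Q^*$). One must check carefully that by choosing the continuous inputs to be unit vectors $e_j$ (or $0$) and varying the discrete sequence, one realizes exactly the columns $A_v B_q e_j$ and the vector $A_v x_0$ as states — and conversely that every reachable state is a linear combination of these — and symmetrically for the output side; the ordering convention in Notation~\ref{repr:not1} (matrices multiplied in reverse word order) must be tracked consistently so the recursion $V_{j+1} = V_j + \sum_q A_q V_j$ comes out right. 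The rest is the standard finite-dimensional chain-stabilization argument.
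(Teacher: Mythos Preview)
Your approach is essentially the paper's: both establish the chain-stabilization lemma $V_{n-1}=\bigcup_k V_k$ via $V_{k+1}=V_k+\sum_q A_qV_k$ and then link the stabilized space to $\SPAN(Reach(\Sigma))$ and, dually, to the indistinguishability subspace (the paper packages this as one lemma for generic $F_q,G$ and applies it via transpose for observability, citing \cite{Sun:Book} for the equivalence of observability with $\bigcap_v\ker\widetilde{C}A_v=\{0\}$, whereas you argue that equivalence directly). One minor imprecision to fix: $y_\Sigma(\cdot,w)$ is affine, not linear, in the initial state, so the correct reduction is that $y_\Sigma(x_1,w)=y_\Sigma(x_2,w)$ for all $w$ iff $C_qA_v(x_1-x_2)=0$ for all $q\in Q$, $v\in Q^{*}$ --- it is the \emph{difference} $y_\Sigma(x_1,w)-y_\Sigma(x_2,w)$ that equals $C_{q_t}A_v(x_1-x_2)$, not $y_\Sigma(x_1-x_2,w)$ itself --- after which your argument goes through unchanged.
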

 Informally, $\mathcal{R}(\Sigma)$ is formed by horizontal
 concatenation of blocks $A_{w}B_q$, for all  $w \in Q^{<n}$, $q \in Q$, and
 $O(\Sigma)$ is the vertical concatenation of blocks
 $C_{q}A_w$, $q \in Q$, $w \in Q^{<n}$.
 Notice that if $Q=\{1\}$, then $\mathcal{R}(\Sigma)$ is the
controllability matrix of $(A_1, \begin{bmatrix} x_0 & B_1 \end{bmatrix})$
 and $O(\Sigma)$ is the observability matrix of $(C_1,A_1)$.
 Hence,
 the linear system $(A_1,B_1,C_1,x_0)$ is weak-reachable (observable)
 if and only if it is span-reachable (observable), if interpreted
 as a \LSS. Hence, Theorem \ref{theo:min} implies
 Theorem \ref{lin:theo:min}. 

 The result of Theorem \ref{sect:real:lemma1} follow
 from \cite{Sun:Book}, the detailed proof can be found in
  Appendix \ref{appA}.
Next, we formulate
procedures for reachability, observability and minimality reduction of \BSLSS. 
\begin{Procedure}[Reachability reduction]
 \label{LSSreach}
 Assume $\dim \mathcal{R}(\Sigma)=n^r$ and choose a basis $b_1,\ldots,b_n$ of 
 $\mathbb{R}^n$ such that $b_1,\ldots,b_{n^{r}}$ span $\IM \mathcal{R}(\Sigma)$.
 In the new basis, $A_q,B_q,C_q$, $q \in Q$ and $x_0$ become as follows
 \[ A_{q}=\begin{bmatrix} A_{q}^r, & A^{'}_{q} \\ 0, & A^{''}_{q} \end{bmatrix},
    C_{q}=\begin{bmatrix} C_q^r, & C_{q}^{nr} \end{bmatrix},
    B_{q}=\begin{bmatrix} B_{q}^r \\ 0 \end{bmatrix},
    x_0=\begin{bmatrix} x_0^r \\ 0 \end{bmatrix}
 \]
where $A^r_{q} \in \mathbb{R}^{n^r \times n^r}, B_q^r \in \mathbb{R}^{n^r \times m}$, $x_0^r \in \mathbb{R}^{n_r}$.
Then $\Sigma_r=\SwitchSysLin[r]$
 is span-reachable, and has the same input-output
 map as $\Sigma$.
\end{Procedure}
Intuitively, $\Sigma_r$ is obtained from $\Sigma$ by restricting the
dynamics and the output map of $\Sigma$ to the space $\IM R(\Sigma)$. 
\begin{Procedure}[Observability reduction]
\label{LSSobs}
 Assume that $\ker O(\Sigma)=n-n^o$ and let $b_1,\ldots,b_n$ be a
 basis in $\mathbb{R}^n$ such that $b_{n^{o}+1},\ldots,b_{n}$ span
 $\ker O(\Sigma)$.  In this new basis, $A_q$,$B_q$, $C_q$ and $x_0$ can be rewritten as 
 \[ A_{q}=\begin{bmatrix} A_{q}^o, & 0 \\ A^{'}_{q}, & A^{''}_{q} \end{bmatrix},
    C_{q}=\begin{bmatrix} C_q^o, & 0 \end{bmatrix},
    B_{q}=\begin{bmatrix} B_{q}^o \\ B_{q}^{'} \end{bmatrix},
    x_0=\begin{bmatrix} x_0^o \\ x_{0}^{'} \end{bmatrix}
 \]
where $A^o_{q} \in \mathbb{R}^{n^o \times n^o}, B_q^o \in \mathbb{R}^{n^o \times m}$,
$C_q^o \in \mathbb{R}^{p \times n^o}$ and $x_0^o \in \mathbb{R}^{n_o}$.
Then the \BLSS $\Sigma_o=\SwitchSysLin[o]$
is observable and its input-output map is the same as that of
$\Sigma$. If $\Sigma$ is span-reachable, then so is $\Sigma_o$.
\end{Procedure}
Intuitively, $\Sigma_o$ is obtained from $\Sigma$ by merging any two
states $x_1$, $x_2$ of $\Sigma$, for which $O(\Sigma)x_1=O(\Sigma)x_2$.
The latter is equivalent to 
 $y_{\Sigma}(x_1,w)=y_{\Sigma}(x_2,w)$, $\forall w \in \HYBINP^{+}$. 
\begin{Procedure}[Minimization]
\label{LSSmin}
 First transform $\Sigma$ to a span-reachable \BLSS $\Sigma_r$ and
 then transform $\Sigma_r$ to an observable \BLSS $\Sigma_m=(\Sigma_r)_o$.
Then $\Sigma_m$ is a minimal realization of the input-output map of
$\Sigma$.
\end{Procedure}
The correctness of Procedures \ref{LSSreach},\ref{LSSobs} and \ref{LSSmin} are proved in \S \ref{sect:proof}, using the theory of formal power series.
Note that the correctness of Procedure \ref{LSSobs} and of Procedure
\ref{LSSreach} (in case of $x_0=0$) has already been 
shown by a direct proof in \cite{Sun:Book}. 
\begin{Example}
\label{example1}
 Let $\Sigma=\SwitchSysLin$ with
 $Q=\{1,2\}$, $n=3$, 
 \( x_0=\begin{bmatrix} 0 & 1 & 0 \end{bmatrix}^{T} \), 
 \[
    \begin{split}
     & A_{1}=\begin{bmatrix}
                      0 & 1 & 0  \\
                      0 & 0 & 1 \\
                      0 & 0  & 1 
                     \end{bmatrix}  \mbox{,\ }
     B_{1} =\begin{bmatrix}
                     0 \\ 0 \\ 0
                     \end{bmatrix} \mbox{, \ }
     C_{1}=\begin{bmatrix} 1 & 0 & 0 \end{bmatrix} \\
     & A_{2} =\begin{bmatrix}
                      0 & 1  & 0\\
                      0 & 1 & 1 \\
                      0 & 0 & 1 
                     \end{bmatrix}  \mbox{,\ }
     B_{2}=\begin{bmatrix}
                     0 \\ 1 \\ 0
                     \end{bmatrix} \mbox{, \ }
     C_{2} =\begin{bmatrix} 0 & 0 & 1 \end{bmatrix} \\
    \end{split}
  \]
  This system is observable, but it is not 
  span-reachable. In order to see observability, notice that
  the sub-matrix 
  $\begin{bmatrix} C_1^T & (C_1A_1)^T & C_2^T \end{bmatrix}^T$ of
  $O(\Sigma)$ is of rank $3$.
  In order to see that $\Sigma$ is not span-reachable, notice 
  that if $(x,y,z)^T$ is a column of $R(\Sigma)$, then $z=0$.
  Hence $\dim R(\Sigma) \le 2$.

  Using Procedure \ref{LSSmin}, we can transform
  $\Sigma$ to the minimal
  realization 
  \[ \Sigma_m=\SwitchSysLin[m] \]
   of $y_{\Sigma}$:
  $Q=\{1,2\}$, $n^m=2$,
  \( x^m_0=\begin{bmatrix} 1, & 0 \end{bmatrix}^T \) and
 \[ 
   \begin{split}
    & A^m_1=\begin{bmatrix} 0 &  0 \\
         1 & 0
        \end{bmatrix}, 
   B^m_1 = \begin{bmatrix}  0 \\ 0 \end{bmatrix}, 
   C^m_1 = \begin{bmatrix} 0, & 1 \end{bmatrix} \\
  \end{split}
\]
\[ \begin{split}
   & A^m_2 = \begin{bmatrix} 1  & 0 \\
           1  & 0
          \end{bmatrix}, 
    B^m_2 =\begin{bmatrix} 1 \\  0 \end{bmatrix}, 
    C^m_2 =\begin{bmatrix}  0, &  0 \end{bmatrix} \\
 \end{split}
\]
 Using \cite{Hof},
 it is easy to see that neither $(A^m_1,B^m_1,C^m_1,x_0^m)$ nor
 $(A^m_2,B^m_2,C^m_2,x_0^m)$ are minimal.
\end{Example}

\section{Main results on existence of a realization}
\label{sect:main_results:ex}
 We present the necessary and sufficient conditions for
 the existence of a \LSS\ realization  for an input-output map.
 In the sequel, \emph{$f$ denotes a map of the form \eqref{io_map}.}
 To this end, we need the notion of the Hankel-matrix  and
 Markov-parameters of  an input-output map. 
 More precisely, we proceed as follows.
 First, we define the notion of Markov parameters of $f$ and
 use them to define the Hankel-matrix of $f$. We then use the Hankel-matrix
 to formulate conditions for existence of a \BLSS realization of $f$.
 To this end, we need the following notation.
\begin{Notation}
 In the sequel, we identify 
 any element $w=(q_0,u_0)\cdots (q_t,u_t) \in \HYBINP^{+}$
 with the pair of sequences $(v,u)$, $v \in Q^{+}$,
 $u \in (\mathbb{R}^m)^{+}$, $v=q_0\cdots q_t$ and $u=u_0 \cdots u_t$.
\end{Notation} 
\begin{Notation}
 Consider the input-output map $f$. 
 For each word $v \in Q^{+}$ of length $|v|=t > 0$ define 
\( f_{v}:(\mathbb{R}^m)^{t} \rightarrow \mathbb{R}^p \)
as
\begin{equation}
 \label{sect:io:eq0}
  \begin{split}
   & f_{v}(u)=f((v,u)). 
  \end{split}
\end{equation}
\end{Notation}
 Now we are ready to define the Markov-parameters of an
 input-output map.
\begin{Definition}[Markov-parameters]
\label{sect:io:def0}
 Denote $Q^{k,*}=\{ w \in Q^{*} \mid |w| \ge k\}$.
 Define the maps $S_0^{f}:Q^{1,*} \rightarrow \mathbb{R}^p$ and
 $S_j^f:Q^{2,*} \rightarrow \mathbb{R}^{p}$, $j=1,\ldots,m$
  as follows; for any $v \in Q^{*}$, $q,q_0\in Q$,
\begin{equation}\label{eq:MarkovParam}
		\begin{split}
		& S_0^{f}(vq)=f_{vq}(0, \ldots, 0) \mbox{ and } \\
		& S^f_{j}(q_0vq)=f_{q_0vq}(e_j,0, \ldots, 0) - f_{q_0vq}(0,\ldots,0), 
		\end{split}
	\end{equation}
with $e_j\in \Re^m$ is the vector with $1$ as its $j$th entry and zero everywhere else. The collection of maps $\{S_j^f\}_{j=0}^{m}$ is 
 called the \emph{Markov-parameters} of $f$. 
\end{Definition}
The function $S_0^f$ can be viewed as the \emph{initial state-response} and
the functions $S_j^f$, $j=1,\ldots,m$ can be viewed as
\emph{input responses}. The interpretation of $S_0^f$, $S_j^f$
will become more clear 
 after we define
the concept of a \emph{generalized convolution representation}.
Note that the values of the Markov-parameters can be obtained 
from the values of $f$, i.e. by means of
input-output experiments.
 \begin{Notation}[Sub-word]
\label{not:subword}
  Consider the sequence $v=q_0\cdots q_t \in Q^{+}$,
  $q_0,\ldots, q_t \in Q$, $t \ge 0$.
 For each $j,k \in \{0,\ldots,t\}$, define
 the word $v_{j|k} \in Q^{*}$ as follows; 
 if $j> k$, then $v_{j|k}=\epsilon$, if $j=k$, then
 $v_{j|j}=q_j$ and if
 $j < k$, then $v_{j|k}=q_jq_{j+1}\cdots q_k$.
 That is, $v_{j|k}$ is the sub-word of $v$ formed by the letters
 from the $j$th to the $k$th letter.
 \end{Notation}
 \begin{Definition}[Convolution representation]
 \label{sect:io:def1}
  The input-output map $f$ has a \emph{generalized convolution
  representation (abbreviated as \GCR)}, if for all 
  $w=(v,u) \in \HYBINP^{+}$, $v=q_0\cdots q_t$,
  $u=u_0\cdots u_t$, $q_0,\ldots, q_t \in Q$, $u_0,\ldots u_t \in \mathbb{R}^m$, 
  $f(w)$ can be expressed via the Markov-parameters of $f$ as follows.
  \begin{equation*}
    \begin{split}
     & f(w)=S_0^f(v_{0|t-1}  q_t) 
      + \sum_{k=0}^{t-1} S^f(q_k  v_{k+1|t-1}  q_t)u_k
    \end{split}
  \end{equation*}
where $S^f(w)=\big[\begin{matrix}S_1^f(w), & S_2^f(w), &  \ldots,  & S_m^f(w) \end{matrix}\big] \in \Re^{p\times m}$ for all $w \in Q^{*}$.
 \end{Definition}
 \begin{Remark}
\label{rem:Unicity-of-from-MP}
  If $f$ has a \GCR, then
  the Markov-parameters of $f$ determine $f$ uniquely.
 \end{Remark}
  The motivation for introducing \GCR{s} is that existence of a \GCR\ is
  a necessary condition for realizability by \SLSS. 
  More precisely, the following holds.
 \begin{Lemma}
 \label{sect:io:lemma1}
  The map $f$ is  realized by the
  \LSS\  $\Sigma$ if and only if $f$ has a \GCR\ and 
  for all $v \in Q^{*}$, $q,q_0 \in Q$,
  \begin{equation}
  \label{sect:io:lemma1:eq1}
	\begin{split}
     & S_0^f(vq)=C_{q}A_{v}x_0 \mbox{ and} \\
     & S^f_{j}(q_0vq)=C_qA_vB_{q_0}e_j, \: j=1,\ldots,m. 
      \end{split}
  \end{equation}
 \end{Lemma}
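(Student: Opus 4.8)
The plan is to prove the two directions separately, with the bulk of the work going into establishing the explicit formula for $y_{\Sigma}(x_0, w)$ in terms of the matrices of $\Sigma$. First I would prove by induction on $|w|$ that for any $x_{init} \in \mathbb{R}^n$ and any $w = (v,u) \in \HYBINP^{+}$ with $v = q_0 \cdots q_t$, $u = u_0 \cdots u_t$, one has
\begin{equation*}
 y_{\Sigma}(x_{init}, w) = C_{q_t} A_{v_{0|t-1}} x_{init} + \sum_{k=0}^{t-1} C_{q_t} A_{v_{k+1|t-1}} B_{q_k} u_k ,
\end{equation*}
using Notation \ref{repr:not1} for $A_{v_{j|k}}$ and the recursive definitions of $x_{\Sigma}$ and $y_{\Sigma}$ from \S \ref{sect:switch}. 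The base case $t = 0$ is immediate from $y_{\Sigma}(x_{init}, (q_0,u_0)) = C_{q_0} x_{init}$, and the inductive step peels off the last letter $(q_t, u_t)$ using $x_{\Sigma}(x_{init}, w) = A_{q_t} x_{\Sigma}(x_{init}, v(q_0,u_0)\cdots(q_{t-1},u_{t-1})) + B_{q_t} u_t$ together with the identity $A_{q_t} A_{v'_{j|k}} = A_{(v' q_t)_{j|k+1}}$ for the relevant sub-words. I expect this indexing bookkeeping with the $v_{j|k}$ notation to be the main obstacle, since one must be careful that the products of $A$'s compose in the correct (reversed) order dictated by \eqref{repr:eq2}.

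For the "only if" direction, suppose $\Sigma$ realizes $f$, i.e. $f = y_{\Sigma}$. Evaluating the displayed formula at $x_{init} = x_0$ and at the special inputs appearing in Definition \ref{sect:io:def0} gives the result: feeding $u = (0,\ldots,0)$ to a word $vq$ yields $f_{vq}(0,\ldots,0) = C_q A_v x_0$, which is exactly $S_0^f(vq) = C_q A_v x_0$; feeding $u = (e_j, 0, \ldots, 0)$ versus $u = (0,\ldots,0)$ to a word $q_0 v q$ and subtracting kills the initial-state term $C_q A_{q_0 v} x_0$ and leaves only the $k=0$ summand $C_q A_v B_{q_0} e_j$, giving $S_j^f(q_0 v q) = C_q A_v B_{q_0} e_j$. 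Existence of a \GCR\ for $f$ then follows because the displayed formula for $y_{\Sigma}$, rewritten using these identified Markov-parameters, is literally the \GCR\ formula of Definition \ref{sect:io:def1}.

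For the "if" direction, suppose $f$ has a \GCR\ and \eqref{sect:io:lemma1:eq1} holds. Then for any $w = (v,u) \in \HYBINP^{+}$, substituting \eqref{sect:io:lemma1:eq1} into the \GCR\ expression for $f(w)$ gives
\begin{equation*}
 f(w) = C_{q_t} A_{v_{0|t-1}} x_0 + \sum_{k=0}^{t-1} C_{q_t} A_{v_{k+1|t-1}} B_{q_k} u_k ,
\end{equation*}
which by the displayed formula (with $x_{init} = x_0$) equals $y_{\Sigma}(x_0, w) = y_{\Sigma}(w)$. Hence $f = y_{\Sigma}$, so $\Sigma$ realizes $f$. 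The only genuinely nontrivial ingredient is the inductive formula for $y_{\Sigma}$; everything else is substitution and matching up the two expressions term by term. I would also note, for use later in the paper, that exactly the same induction shows $y_{\Sigma}(x_1,w) - y_{\Sigma}(x_2,w) = C_{q_t} A_{v_{0|t-1}}(x_1 - x_2)$, which connects observability to the rank of $O(\Sigma)$, though that is not needed for the present statement.
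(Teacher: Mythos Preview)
Your proposal is correct and follows essentially the same route as the paper: establish the closed-form expression for $y_{\Sigma}$ by induction on the length of $w$, read off the Markov-parameters of $y_{\Sigma}$ from it, and then match them with those of $f$. The only cosmetic difference is in the ``if'' direction: the paper argues that the Markov-parameters of $f$ and $y_{\Sigma}$ coincide and invokes Remark~\ref{rem:Unicity-of-from-MP}, whereas you substitute \eqref{sect:io:lemma1:eq1} directly into the \GCR\ formula and compare with the induction formula --- which is exactly what that remark amounts to.
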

  The proof of Lemma \ref{sect:io:lemma1} can be found in
  Appendix \ref{appA}.
  From Lemma \ref{sect:io:lemma1} it follows that
  if $f$ is realizable by a \LSS, then 
  the values of $S_0^f$ and $S_j^f$, $j=1,\ldots,m$ can be 
  expressed as products of matrices. 
  Moreover, $S_0^f$ corresponds to the part of the response which
  depends on the initial state,
  and $\{S_j^f\}_{j=1}^{m}$ encodes
  the response from the zero initial state.

  We can draw the following analogy with the linear case \S \ref{sect:lin}.
  Existence of a \GCR\ is analogous to the requirement
  that the input-output map is of the form \eqref{eq:LinearSys-Markov}.
  The Markov-parameter
  $S_0^f(vq)$ corresponds to the vector $K_{|v|}$, and the vector
  $S^f_{j}(q_0vq)$ corresponds to the $j$th column of the matrix $H_{|v|}$.
  Finally, if $f$ can be realized by a \LSS, then the Markov-parameters can be expressed as products of matrices \eqref{sect:io:lemma1:eq1}. This
  is analogous to the linear case, where $K_t=CA^tx_0$ and
  $H_t=CA^tB$ holds for $t \ge 0$, if $(A,B,C,x_0)$ is a realization of
  the input-output map.
  In fact,if $Q=\{1\}$, i.e. we are dealing with linear systems,
  then $S_0^f(vq)=K_{|v|}$, $S_j^f(q_0vq)$ is the $j$th column of
  $H_{|v|}$ and the \GCR\ is the representation of the form 
  \eqref{eq:LinearSys-Markov}, and the right-hand sides of \eqref{sect:io:lemma1:eq1} becomes $CA^{|v|}x_0$, $CA^{|v|}Be_j$, where
  $C=C_1,A=A_1, B=B_1$.

Next, we define the concept of a Hankel-matrix. 
Similarly to the linear case, the entries of
the Hankel-matrix are formed by the Markov parameters. 
For the definition of the Hankel-matrix of $f$,
we will use lexicographical ordering on the
set of sequences $Q^{*}$.
\begin{Remark}[Lexicographic ordering]
\label{rem:lex}
 Recall that $Q=\{1,\ldots,\QNUM\}$. We define a lexicographic
 ordering $\prec$ on $Q^{*}$ as follows.
 For any $v,s \in Q^{*}$, $v \prec s$ if either
 $|v| < |s|$ or $0 < |v|=|s|$, $v \ne s$ and for some $l \in \{1,\ldots, |s|\}$,
 $v_l < s_l$ with the usual ordering of integers and
 $v_i=s_i$ for $i=1,\ldots, l-1$. Here $v_i$ and $s_i$ denote the $i$th letter
 of $v$ and $s$ respectively.
 Note 
 that $\prec$ is a complete ordering and
 $Q^{*}=\{v_1,v_2,\ldots \}$ with $v_1 \prec v_2 \prec \ldots $.
 Note that $v_1 =\epsilon$ and for all $i \in \mathbb{N}$, $q \in Q$,
 $v_i \prec v_iq$.
\end{Remark}
In order to simplify the definition of a Hankel-matrix,
we introduce the notion of a combined Markov-parameter.
\begin{Definition}[Combined Markov-parameters]
 A combined Markov-parameter $M^f(v)$ of $f$ indexed by 
 the word
 $v \in Q^{*}$ is the following
 $p\QNUM \times (\QNUM m+1)$ matrix 
 \begin{equation}
   \label{main_results:lin:arb:pow2}
     M^f(v) = \begin{bmatrix}
             S^f_0(v1), & S^f(1v1), & \cdots, & S^f(\QNUM v1) \\
             S^f_0(v2), & S^f(1v2), & \cdots, & S^f(\QNUM v2) \\
             \vdots   & \vdots     & \cdots & \vdots \\
             S^f_0(v\QNUM), & S^f(1v\QNUM), & \cdots, & S^f(\QNUM v \QNUM)
             \end{bmatrix}
\end{equation}
 where for any $w \in Q^{+}$, $|w| > 2$,
 $S^f(w)=\begin{bmatrix} S^f_{1}(w), & S^f_{2}(w), & \ldots, & S^f_{m}(w) \end{bmatrix}$.
\end{Definition}
\begin{Definition}[Hankel-matrix] 
\label{main_result:lin:hank:arb:def}
 Consider the lexicographic ordering $\prec$ of $Q^{*}$ from
 Remark \ref{rem:lex}.
 Define the Hankel-matrix $H_f$ of $f$ as the following
 infinite matrix
 \[ H_f = \begin{bmatrix}
     M^f(v_1v_1), & M^f(v_2v_1), & \cdots, & M^f(v_kv_1), & \cdots \\
     M^f(v_1v_2), & M^f(v_2v_2), & \cdots, & M^f(v_{k}v_2), & \cdots, \\
     M^f(v_1v_3)
     & M^f(v_2v_3), & \cdots, & M^f(v_{k}v_3) & \cdots \\
     \vdots   
      & \vdots   & \cdots & \vdots & \cdots 
   \end{bmatrix},
 \]
i.e. the $p\QNUM \times (m\QNUM+1)$ block of $H_f$ 
in the block row $i$ and block column $j$
equals the combined Markov-parameter $M^f(v_jv_i)$ of $f$.
The rank of $H_f$, denoted by $\Rank H_f$, 
is the dimension of the linear span of its columns.
\end{Definition}
 The Hankel-matrix of $f$ can also be viewed as a matrix
 rows and columns of which are indexed by words from $Q^{*}$.
\begin{Remark}[Alternative definition of the Hankel-matrix]
\label{alt:hank}
 Notice that every row index $0 < l \in \mathbb{N}$ of $H_f$ 
 can be identified with a tuple
 $(v,i)$, $i=1,\ldots,p\QNUM$ and
 $v \in Q^{*}$ as follows;
 $v=v_r$, i.e. $v$ is the $r$th element of $Q^{*}$, 
 for some $0 < r \in \mathbb{N}$ such that
 $l=(r-1)\QNUM p+i$. In fact the identification above is
  a one-to-one mapping.

   Similarly, every column index $0 < k \in \mathbb{N}$ can be
   identified with  a pair $(w,j)$ where
      $w \in Q^{*}$, $j \in J_f = \{0\} \cup Q \times \{1,\ldots,m\}$, 
      where $w=v_r$, i.e. $w$ is the $r$th element of $Q^{*}$
      for some $r \in \mathbb{N}$ such that
      $k=(r-1)(m\QNUM+1)+i$ for some integer $i=1,\ldots,m\QNUM+1$, and
      if $i=1$ then $j=0$ and if $i=m(q-1)+z+1$ for some
      $q \in Q$ and $z=1,\ldots,m$, then $j=(q,z)$.
      This identification is one-to-one.

      Using the identification of row and column indices
      outlined above, we can view $H_f$ as a matrix, rows of
      which are indexed by $(v,i)$, $v \in Q^{*}$,
      $i=1,\ldots,p\QNUM$, and columns of which are indexed by
      $(w,j)$, $w \in Q^{*}$, $j \in J_f$.
      The entry $\left[H_{f}\right]_{(v,i),(w,j)}$ of $H_f$ indexed by row index $(v,i)$ and column
      index $(w,j)$ is the $i$th entry of the $r$th column
      of $M^f(wv)$, where $r=1$, if $j=0$ and $r = m(q-1)+z+1$ if $j=(q,z)$. In other words, 
  \[ 
   \begin{split}
   & \left[H_{f}\right]_{(v,i),(w,(q,z))}=\big[S^f_{z}(qwv\alpha_i)\big]_{l}, \\
  & \left[H_{f}\right]_{(v,i),(w,0)}=\big[S_0^f(wv\alpha_i)\big]_{l} 
   \end{split}
   \]
where $\alpha_i=K+1$ with $K$ and $l$ defined from $i$ by the decomposition $i=pK+l$,  $K=0,1,\ldots, \QNUM-1$, $l=1,\ldots, p$. 
Here, $\left[a\right]_l$ denotes the $l$th entry of a vector $a$. 
\end{Remark}
It is not difficult to see that for $Q=\{1\}$, $H_f$ is the same as
the Hankel-matrix defined in \S \ref{sect:lin}.
The main result on realization theory of
\SLSS\ can be stated as follows.
  \begin{Theorem}
  \label{sect:real:theo2}
      The map $f$ has a realization by  a \LSS\  if and
      only if $f$ has a \GCR\ 
      and 
     $\Rank H_{f} < +\infty$. A minimal realization of $f$
     can be constructed from $H_{f}$ (see Procedure \ref{LSSfromHankel})
     and any minimal \LSS\ 
     realization of $f$ has dimension $\Rank H_f$.
\end{Theorem}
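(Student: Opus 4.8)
The plan is to reduce the realization problem for \SLSS\ to the realization problem for rational formal power series, and then to invoke the classical rank theorem for such series (stated in the forthcoming \S\ref{sect:pow}). Concretely, I would associate to the input-output map $f$ a family of formal power series over the alphabet $Q$. By Lemma \ref{sect:io:lemma1}, if $f$ has a \GCR, its entire content is encoded by the Markov-parameters $S_0^f$ and $S_j^f$, $j=1,\ldots,m$; these maps take words in $Q^{*}$ to vectors in $\Re^p$, so each of their scalar components is a formal power series in the non-commuting variables $Q=\{1,\ldots,\QNUM\}$. Bundling all $p\QNUM(m\QNUM+1)$ scalar series together (matching the block structure of the combined Markov-parameter $M^f(v)$), one obtains a single $\Re^{p\QNUM\times(m\QNUM+1)}$-valued formal power series $\Psi_f$ whose coefficient at word $v$ is $M^f(v)$. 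The key observation is that $H_f$ is, up to the reindexing spelled out in Remark \ref{alt:hank}, exactly the Hankel-matrix of $\Psi_f$ in the sense of formal power series theory; hence $\Rank H_f$ is the Hankel-rank of $\Psi_f$.

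With this dictionary in place, the proof splits into the two implications. For necessity, suppose $\Sigma=\SwitchSysLin$ realizes $f$. Then by Lemma \ref{sect:io:lemma1}, $f$ has a \GCR\ and the Markov-parameters are given by \eqref{sect:io:lemma1:eq1}, i.e. $S_0^f(vq)=C_qA_vx_0$ and $S_j^f(q_0vq)=C_qA_vB_{q_0}e_j$. Reading off the coefficient $M^f(v)$ of $\Psi_f$, one sees it factors as $\widetilde{C}A_v\widetilde{B}$ with $\widetilde{B}=\begin{bmatrix} x_0, & B_1, & \cdots, & B_{\QNUM}\end{bmatrix}$ and $\widetilde{C}=\begin{bmatrix} C_1^T & \cdots & C_{\QNUM}^T\end{bmatrix}^T$ (exactly the matrices appearing in Theorem \ref{sect:real:lemma1}). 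This is precisely a linear representation of $\Psi_f$ of dimension $n=\dim\Sigma$, so $\Psi_f$ is rational and its Hankel-rank is at most $n$; thus $\Rank H_f<+\infty$. For sufficiency, suppose $f$ has a \GCR\ and $\Rank H_f=n<+\infty$. Then $\Psi_f$ has finite Hankel-rank, hence is rational, and the constructive part of the formal-power-series realization theorem yields a minimal linear representation of dimension $n$; by the above factorization this representation has exactly the shape $(\widetilde{A}_q,\widetilde{B},\widetilde{C})$ needed to define a \LSS\ $\Sigma$ with $\dim\Sigma=n$. One checks via Lemma \ref{sect:io:lemma1} that this $\Sigma$ realizes $f$. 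Finally, minimality: any realization of $f$ gives a linear representation of $\Psi_f$ of the same dimension, so no realization can have dimension below the Hankel-rank $n$; conversely the constructed $\Sigma$ attains it, so it is minimal of dimension $\Rank H_f$. The explicit extraction of $A_q,B_q,C_q,x_0$ from bases of the column space of $H_f$ is the content of Procedure \ref{LSSfromHankel}.

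The main obstacle I anticipate is bookkeeping rather than conceptual: one must verify carefully that the block reindexing of $H_f$ described in Remark \ref{alt:hank} really does match the standard Hankel-matrix of the vector-valued series $\Psi_f$, including the shift structure — in the formal-power-series setting the Hankel-matrix entry at $(v,w)$ depends on the concatenation $wv$ (or $vw$, depending on convention), and the definition of $H_f$ must be reconciled with whichever convention \S\ref{sect:pow} adopts (note Notation \ref{repr:not1} already reverses the order of multiplication, $A_w=A_{\sigma_k}\cdots A_{\sigma_1}$, which is exactly the reversal that makes the factorization $M^f(v)=\widetilde{C}A_v\widetilde{B}$ consistent with $S_0^f(vq)=C_qA_vx_0$). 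A secondary point requiring care is the role of the extra ``$q$'' at the end of each argument of $S_0^f$ and $S_j^f$ (the Markov-parameters are indexed by words of length $\ge 1$ or $\ge 2$), which is why the combined Markov-parameter $M^f(v)$ stacks over the terminal letter and the leading letter; one must make sure this padding is absorbed consistently into $\widetilde{C}$ and $\widetilde{B}$ and does not disturb the rank count. Once these indexing identifications are nailed down, the theorem follows directly from the rational-series realization theorem and Lemma \ref{sect:io:lemma1}, with the explicit construction deferred to Procedure \ref{LSSfromHankel}.
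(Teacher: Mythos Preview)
Your proposal is correct and follows essentially the same route as the paper: associate to $f$ the family of formal power series $\Psi_f$ built from the Markov-parameters, identify $H_f$ with $H_{\Psi_f}$, and then invoke the existence/minimality theorem for rational representations (Theorem \ref{sect:form:theo1}) together with the dictionary between \SLSS\ and representations (which the paper packages as Theorem \ref{sect:real:theo1}). Your factorization $M^f(v)=\widetilde{C}A_v\widetilde{B}$ is exactly the content of the paper's construction $R_{\Sigma}$ (Definition \ref{sect:real:const1}); the only cosmetic difference is that the paper indexes $\Psi_f$ as a \emph{family} of $\Re^{p\QNUM}$-valued series over $J_f=\{0\}\cup Q\times\{1,\ldots,m\}$ rather than as a single matrix-valued series with coefficient $M^f(v)$, and the bookkeeping concerns you flag are handled there by Part \ref{Part0pf} of Theorem \ref{sect:real:theo1} and Remark \ref{alt:hank}.
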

\begin{Procedure}
\label{LSSfromHankel}
 If $\Rank H_f = n < +\infty$, then a \BLSS $\Sigma_f$ 
of the form \eqref{lin_switch0} can be constructed from $H_f$ as follows.  
Choose a basis in the
column space of $H_f$. 

In this basis, let
$x_0$ be the coordinates of the first column of $H_f$.
For each $l=1,\ldots,m$, the $l$th column of $B_q$, $q \in Q$
is formed by 
coordinates of the
$m(q-1)+l+1$th column of $H_f$.
Let $C_q$, $q \in Q$ be the matrix of the linear map which maps
every column to the vector formed by its rows indexed
by $p(q-1)+1,p(q-1)+2,\ldots, pq$.
Define $A_q$, $q \in Q$ as the matrix of the linear map
which maps the $r$th column of the block column
$(M(v_jv_i))_{i=1}^{\infty}$ to the $r$th column of the block column
$(M(v_jqv_i))_{i=1}^{\infty}$, for each $j=1,2,\ldots, $ and
$r=1,2,\ldots, (\QNUM m+1)$.

Alternatively, using Remark \ref{alt:hank} we can describe $\Sigma_f$
as follows. The initial state $x_0$ is formed
by the coordinates of the column of $H_f$ indexed by $(\epsilon,0)$.
The $l$th column of $B_q$, $q \in Q$ is formed by the coordinates
of the column of $H_f$ indexed by $(\epsilon, (q,l))$, $l=1,\ldots,m$.
The matrix $C_q$, $q \in Q$ is the matrix of the linear map
which maps each column of $H_f$ to the vector formed by its rows which
are indexed by $(\epsilon, p(q-1)+1),\ldots, (\epsilon, pq)$.
Finally, $A_q$ is the matrix of the map which maps each
column indexed by $(w,j)$ to the column indexed by $(wq,j)$,
$w \in Q^{*}$, $j \in J_f$.
\end{Procedure}
 Notice that for $Q=\{1\}$, Theorem \ref{sect:real:theo2} implies
 Theorem \ref{lin:theo:ex}, and Procedure \ref{LSSfromHankel}
 reduces to Procedure \ref{lin:hank}.
\begin{Example}
\label{example2}
 Consider a SISO input-output map $f$ such that for any $v \in Q^{+}$, 
$|v|=t$,
 \[ 
   f_{v}(u_1,\ldots,u_t)=\left\{\begin{array}{rl}
    1+\sum_{j=1}^{t-2} u_j & \mbox{ if }  t > 1 \mbox{ and } 
    v=2^{t-1}1 \mbox{ or } \\ 
    &  v=2^{t-2}11,  \\
    0 & \mbox{ otherwise }
   \end{array}\right.
 \]
 Hence, the Markov-parameters of $f$ are as follows
 \[ 
   \begin{split}
     & S_0^f(v)=
             \left\{\begin{array}{rl}
             1 & \mbox{ if } t > 1 \mbox{ and } v=2^{t-1}1 \mbox{ or } v=2^{t-2}11  \\
             0 & \mbox{ otherwise } \\
	   \end{array}\right. \\
     & S_1^f(v)=
             \left\{\begin{array}{rl}
             1 & \mbox{ if } t > 2 \mbox{ and } v=2^{t-1}1 \mbox{ or } v=2^{t-2}11 \\
             0 & \mbox{ otherwise } 
             \end{array}\right.
  \end{split}
\]
 It is easy to check that $\Sigma$ from Example \ref{example1} 
 satisfies \eqref{sect:io:lemma1:eq1} from Lemma \ref{sect:io:lemma1},
 hence $\Sigma$ is a realization of $f$.

  Consider the Hankel-matrix $H_f$ of $f$. It is easy to see that
  the set of columns of $H_f$ contains two elements: $b_1$ and $b_2$.
  The entries of $b_1$ equal $1$, if indexed by $(v,1)$ with  
  $|v| > 0$ and $v=2^{|v|}$ or $v=2^{|v|-1}1$ and are zero otherwise.
  The only non-zero entry of $b_2$ is $1$ and it is indexed by $(\epsilon,1)$. 
  Applying Procedure \ref{LSSfromHankel} to our example, and taking
  $(b_1,b_2)$ as a basis of $\IM H_f$, we obtain a
  \BLSS of the form \eqref{lin_switch0} which coincides with 
  $\Sigma_m$ from Example \ref{example1}.  
 
  Indeed, since the column of $H_f$
  indexed by $(\epsilon, (1,1))$ is zero, and the column indexed by
  $(\epsilon,0)$ and $(\epsilon,(2,1))$ is $b_1$, we get $B_1=0$,
  $B_2=x_0=(1,0)$. Since the entries of any column indexed by $(\epsilon,2)$
  are zero, we get $C_2=0$. Since the entries of $b_1$ and $b_2$ indexed
  by $(\epsilon,1)$ are $1$, we get $C_1=(1,1)^T$. 
  Note that
  if the column of $H_f$ indexed by $(w,j)$ equals $b_1$, then
  the column indexed by $(w1,j)$ equals $b_2$, the column indexed by
  $(w2,j)$ equals $b_1+b_2$. If the column indexed by $(w,j)$ equals $b_2$, then 
  the column indexed by $(w1,j)$ and $(w2,j)$ are both zero. Hence, if
  $A_1$ and $A_2$ are viewed as linear maps on $\IM H_f$, then
  $A_1b_1 = b_2$, $A_1b_2=0$, $A_{2}b_2=0$, $A_2b_1=b_1+b_2$. In other words,
  the matrices $A_1$ and $A_2$ are precisely the same as the matrices
  $A_1^m$ and $A_2^m$ from Example \ref{example1}.
\end{Example}

 Note that once the Markov-parameters are defined,
 the definition of Hankel-matrix presented above 
 coincides with that of the continuous-time case.
 As a consequence, we can repeat
 the realization algorithm described 
 in \cite[Algorithm 1]{MP:PartReal} for \BSLSS.
 Moreover, \cite[Theorem 4]{MP:PartReal} holds for
 \BSLSS.  For the sake of completeness, below we state the
 realization algorithm and its correctness explicitly
 for \BSLSS.
\begin{Definition}[$H_{f,L,M}$ sub-matrices of $H_{f}$]
 \label{sect:main_result:sub-hank:def1}
  For $L,M \in \mathbb{N}$ 
  define the integers 
  $I_L=\mathbf{N}(L)p\QNUM$ and $J_M=\mathbf{N}(M)(m\QNUM+1)$.
  Denote by $H_{f,L,M} $ the following upper-left $I_L \times J_M$ 
  sub-matrix of $H_{f}$, 
 \[ 
    \begin{bmatrix}
     M^f(v_1v_1), & M^f(v_2v_1), & \cdots, & M^f(v_{\mathbf{N}(M)}v_1) \\
     M^f(v_1v_2), & M^f(v_2v_2), & \cdots, & M^f(v_{\mathbf{N}(M)}v_2) \\
     \vdots   
      & \vdots   & \cdots & \vdots & \\
     M^f(v_1v_{\mathbf{N}(L)}), & M^f(v_2v_{\mathbf{N}(L)}), & \cdots, & M^f(v_{\mathbf{N}(M)}v_{\mathbf{N}(L)}) 
   \end{bmatrix}.
 \]
 \end{Definition}
   \begin{algorithm}
   \caption{
    \newline
    \textbf{Inputs:} Hankel-matrix $H_{f,N,N+1}$.  
    \newline
    \textbf{Output:} \LSS\ $\Sigma_N$
    }
    \label{alg0} 
    \begin{algorithmic}[1]
    \STATE
        Compute the decomposition
        $H_{f,N,N+1} = \mathbf{O}\mathbf{R}$ such that 
        $\mathbf{O} \in \mathbb{R}^{I_{N} \times n}$ and
        $\mathbf{R} \in \mathbb{R}^{n \times J_{N+1}}$ and 
        $\Rank \mathbf{R} =\Rank \mathbf{O} = n$.
   \STATE
     Consider the decomposition
    \[ \mathbf{R}=\begin{bmatrix} \mathbf{C}_{v_1}, & \mathbf{C}_{v_2}, & \ldots, & \mathbf{C}_{v_{\mathbf{N}(N+1)}} \end{bmatrix} 
     \] 
     such that $\mathbf{C}_{v_i} \in \mathbb{R}^{n \times (\QNUM m+1)}$.
     Define $\overline{\mathbf{R}}, \mathbf{R}_q \in \mathbb{R}^{n \times J_N}$, $q \in Q$ as follows
    \[
      \begin{split}
       & \overline{\mathbf{R}}=
       \begin{bmatrix} \mathbf{C}_{v_1}, & \mathbf{C}_{v_2}, & \ldots, & \mathbf{C}_{v_{\mathbf{N}(N)}} \end{bmatrix}  \\
      & \mathbf{R}_q=
       \begin{bmatrix} \mathbf{C}_{v_1q}, & \mathbf{C}_{v_2q}, & \ldots, & \mathbf{C}_{v_{\mathbf{N}(N)q}} \end{bmatrix}  \\
      \end{split}
     \]

    \STATE
       
       Construct $\Sigma_N$ of the form (\ref{lin_switch0}) such that
          \begin{eqnarray}
              & & 
		\begin{bmatrix} x_0, B_1,\ldots, B_{\QNUM}
                \end{bmatrix} = \nonumber  \\
             & & \mbox{the first $m\QNUM+1$ columns of $\mathbf{R}$} 
          \label{alg0:eq-2} \\
             & &  \begin{bmatrix} C_1^T, & C_2^T, & \ldots, & C_{\QNUM}^T 
\end{bmatrix}^T = \mbox{ the first $p\QNUM$ rows of $\mathbf{O}$} 
             \label{alg0:eq-1}  \\
	& & \forall q \in Q:  A_q=\mathbf{R}_{q}\overline{\mathbf{R}}^{+}, 
         \label{alg0:eq1}
          \end{eqnarray}
      where $\overline{\mathbf{R}}^{+}$ is the Moore-Penrose pseudoinverse
      of $\overline{\mathbf{R}}$.
    \STATE
       Return $\Sigma_N$
    \end{algorithmic}
   \end{algorithm}
\begin{Remark}[Implementation]
 One way to compute the factorization 
 $H_{f,N,N+1}=\mathbf{O}\mathbf{R}$ is as follows.
 If $H_{f,N,N+1}=U\Sigma V$ is the SVD 
  decomposition of $H_{f,N,N+1}$, then define
 $\mathbf{O}=U\Sigma^{1/2}$ and $\mathbf{R}=\Sigma^{1/2}V$.
\end{Remark}
  \begin{Theorem}
  \label{part_real_lin:theo1}
  If $\Rank H_{f,N,N}=\Rank H_{f}$, then 
  the algorithm returns a minimal realization of $f$.
  The condition $\Rank H_{f,N,N}=\Rank H_{f}$ holds for a given $N$, if
  there exists an \LSS\  realization $\Sigma$ of $f$ such that
  $\dim \Sigma \le N+1$.
\end{Theorem}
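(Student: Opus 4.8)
The plan is to reduce the correctness of Algorithm \ref{alg0} to the realization theory of rational formal power series, exactly as for the Hankel-matrix of the whole map $f$, and then to argue that truncating at level $N$ loses no information under the rank hypothesis. Concretely, I would first recall (from the formal-power-series machinery of \S\ref{sect:pow}, invoked in the proof of Theorem \ref{sect:real:theo2}) that $H_f$ is the Hankel-matrix of a family of formal power series in the alphabet $Q$, that $\Rank H_f$ is finite iff this family is rational, and that Procedure \ref{LSSfromHankel} produces a minimal \BLSS realization by reading off $x_0$, $B_q$, $C_q$ from the first block column/row of a rank factorization of $H_f$ and $A_q$ from the shift-by-$q$ operator on its column space. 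The key observation is that the factorization $H_{f,N,N+1}=\mathbf{O}\mathbf{R}$ with $\Rank\mathbf{O}=\Rank\mathbf{R}=n$ in Step~1, together with $n=\Rank H_{f,N,N}=\Rank H_f$, means the columns of $\mathbf{R}$ already span the full column space of $H_f$: every column of $H_f$, when restricted to the rows appearing in $H_{f,N,\cdot}$, is a linear combination of columns of $\overline{\mathbf{R}}$, and this linear dependence is inherited by the full (untruncated) columns because $\Rank H_{f,N,\cdot}=\Rank H_f$ forces the row-restriction map to be injective on the column space.

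Second, I would verify that the matrices produced in Step~3 coincide, up to a change of basis, with those of Procedure \ref{LSSfromHankel}. Equations \eqref{alg0:eq-2} and \eqref{alg0:eq-1} are literally the prescriptions of Procedure \ref{LSSfromHankel} for $x_0$, $B_q$, $C_q$ (first $m\QNUM+1$ columns, resp. first $p\QNUM$ rows, of the factorization), so the only point requiring work is \eqref{alg0:eq1}: I must show $A_q=\mathbf{R}_q\overline{\mathbf{R}}^{+}$ represents the shift-by-$q$ map in the basis given by the columns of $\mathbf{O}$. For this, note $\mathbf{R}_q=A_q^{\mathrm{true}}\overline{\mathbf{R}}$ where $A_q^{\mathrm{true}}$ is the shift operator (this is precisely the defining property of the shift: the $r$th column of $\mathbf{C}_{v_iq}$ is the $\mathbf{O}$-coordinate vector of the column of $H_f$ indexed by $(v_iq,\cdot)$, which is the $q$-shift of the one indexed by $(v_i,\cdot)$), and since $\overline{\mathbf{R}}$ has full row rank $n$, $\overline{\mathbf{R}}\,\overline{\mathbf{R}}^{+}=I_n$, whence $\mathbf{R}_q\overline{\mathbf{R}}^{+}=A_q^{\mathrm{true}}$. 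Crucially, $\overline{\mathbf{R}}$ has rank $n$ exactly because $\Rank H_{f,N,N}=\Rank H_f=n$: the first $\mathbf{N}(N)$ block columns already realize the full rank. Then Theorem \ref{sect:real:theo2} (via Procedure \ref{LSSfromHankel}) gives that $\Sigma_N$ is a minimal realization of $f$ of dimension $n=\Rank H_f$, proving the first assertion.

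For the second assertion, suppose there is an \BLSS realization $\Sigma$ of $f$ with $\dim\Sigma=:d\le N+1$. By Lemma \ref{sect:io:lemma1} the entries of $H_f$ are of the form $C_qA_vB_{q_0}e_j$ and $C_qA_vx_0$, so each column of $H_f$ lies in the span of the $d$ vectors obtained by ranging $A_w$, $w\in Q^{*}$, applied to the $m\QNUM+1$ columns of $\widetilde B$ — more precisely, a Cayley--Hamilton-type argument on the non-commutative words (as in Theorem \ref{sect:problem_form:reachobs:prop1}, where words of length $\ge n=d$ are redundant) shows every column of $H_f$ is a linear combination of columns indexed by words $w$ with $|w|<d\le N$. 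These columns all appear in $H_{f,N,N}$, and likewise the corresponding rows (indexed by words of length $<d\le N$) suffice to detect linear independence, again by the length-$<d$ redundancy applied to $O(\Sigma)$. Hence $\Rank H_{f,N,N}=\Rank H_f$, as claimed. The main obstacle I anticipate is the bookkeeping in this last step: carefully matching the block-index conventions of Definition \ref{sect:main_result:sub-hank:def1} (the $\mathbf{N}(\cdot)$ counting of $Q^{<\cdot}$) with the word-length bound $d\le N+1$, and checking that "length $<d$" words are indeed all present among the first $\mathbf{N}(N)$ block columns and block rows — i.e. that the off-by-one between $N$ and $N+1$ in $H_{f,N,N+1}$ versus the rank test on $H_{f,N,N}$ is consistent. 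Everything else is a routine transcription of the rational-formal-power-series realization argument already used for Theorem \ref{sect:real:theo2}.
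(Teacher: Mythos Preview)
Your proposal is correct and follows essentially the same route as the paper. The only difference is one of packaging: the paper invokes Theorem~\ref{part_real_pow:theo} (the partial-realization theorem for rational formal power series) as a black box---identifying $H_{f,K,L}$ with $H_{\Psi_f,K,L}$, obtaining the minimal representation $R_N$, and then checking that $\Sigma_N=\Sigma_{\MORPH R_N}$ for the isomorphism $\MORPH=\mathbf{O}^{+}$---whereas you unpack that theorem's content directly on the \LSS\ side by verifying the shift identity $\mathbf{R}_q=A_q^{\mathrm{true}}\overline{\mathbf{R}}$ and the full-row-rank property $\overline{\mathbf{R}}\,\overline{\mathbf{R}}^{+}=I_n$. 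Likewise, for the second assertion the paper simply cites $\Rank H_f\le\dim\Sigma\le N+1$ from Theorem~\ref{sect:real:theo2} and then Theorem~\ref{part_real_pow:theo}, while you spell out the underlying word-length redundancy argument (which is exactly what that theorem encapsulates). Your more explicit treatment buys self-containment; the paper's buys brevity. The logic is the same.
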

The proof of Theorem \ref{part_real_lin:theo1} can be found in
\S \ref{sect:proof}.
\begin{Remark}[Computation of $H_{f,N,N}$]
 Note that $H_{f,N,N}$ can be computed from the responses of $f$.
 However, in principle, 
 the computation of $H_{f,N,N}$ requires an exponential number
 of input/output experiments involving different switching sequences.
 This is clearly not very practical. It would be more practical
 to build $H_{f,N,N}$ based on the response of $f$ to a single
 switching sequence. Preliminary results on the latter approach
 can be found in \cite{MPLB:Pers}.
  A detailed discussion of this approach
 goes beyond the scope of this paper.
\end{Remark}

\section{ Formal Power Series }
\label{sect:pow}
In this section we present an overview of the necessary results on
formal power series.
The material of the section is an extension of the classical
theory of \cite{Reut:Book,Son:Real}, for the proofs of the results
of this section
see \cite{MP:Phd,MP:BigArticlePartI}.

 Let $X$ be a finite set, which we refer to as the alphabet.
  A \emph{formal power series} $S$ with coefficients in 
  $\mathbb{R}^{\POWO}$ is a map
  \[ S: X^{*} \rightarrow \mathbb{R}^{\POWO} \]
   We denote by
   $\mathbb{R}^{\POWO} \ll   X^{*}\gg$ the set of all such maps.
   Let $J$ be an arbitrary 
   (possibly infinite) set. 
   A \emph{family of formal power series in 
    $\mathbb{R}^{\POWO}\ll X^{*}\gg$ indexed by $J$}, abbreviated as
    \RFSF is a collection 
   \begin{equation}
   \label{fam:pow1}   
   \Psi=\{ S_{j} \in \mathbb{R}^{\POWO}\ll X^{*} \gg \mid j \in J\}.
  \end{equation}
 In the sequel \emph{$\Psi$ denotes  a \RFSF of the form (\ref{fam:pow1})}.
    Notice that we do not require 
    $S_{j}$, $j \in J$ to be all
    distinct , i.e. $S_{l}=S_{j}$ for some indices
    $j,l \in J$, $j \ne l$ is allowed.

   Let $J$ be an arbitrary set and let $\POWO >0$.  A 
   \emph{$\POWO$-$J$ rational representation 
   over the alphabet $X$}
   is a tuple
   \begin{equation}
   \label{repr:def0}
    R=(\mathcal{X},\{A_{\sigma}\}_{\sigma \in X},B,C)
   \end{equation}
   where $\mathcal{X}$ is a finite-dimensional 
   vector space over $\mathbb{R}$, for each 
   $\sigma \in X$, 
   $A_{\sigma}:\mathcal{X} \rightarrow \mathcal{X}$ is a 
   linear map, $C:\mathcal{X} \rightarrow \mathbb{R}^{\POWO}$
   is a linear map, and $B=\{ B_{j} \in \mathcal{X} \mid j \in J\}$
   is a family of elements of $\mathcal{X}$ indexed
   by $J$. If $\POWO$ and $J$ are clear from the context we will
   refer to $R$ simply as a \emph{rational representation}.
   We call $\mathcal{X}$ the 
   \emph{ state-space }, $A_{\sigma}$, $\sigma \in X$
   the \emph{state-transition maps}, and
   $C$ the \emph{readout map} of $R$.
   The family $B$ is called the \emph{family of initial states of $R$}.
   The dimension $\dim \mathcal{X}$ of the state-space is called the 
   \emph{dimension} of $R$ and
   it is denoted by $\dim R$. 
   If $\mathcal{X}=\mathbb{R}^{n}$, then we identify
   the linear maps $A_{\sigma}$, $\sigma \in X$ and
   $C$ with their matrix representations in the standard
   Euclidean bases,  and we call them the \emph{state-transition matrices} and the
   \emph{readout matrix} respectively.

The $\POWO-J$ representation $R$ from (\ref{repr:def0}) is said to be
  a \emph{representation of $\Psi$}, if 
   \begin{equation}
   \label{repr:def1}
   \forall j \in J, \forall w \in X^{*}: S_{j}(w)=CA_wB_j,
   \end{equation}
	where  Notation \ref{repr:not1} has been used. 
  We say that the family 
  $\Psi$ 
  is \emph{rational}, if there exists a $\POWO$-$J$ representation
  $R$ such that $R$ is a representation of $\Psi$.
%
 A representation $R_{min}$ of $\Psi$ is called \emph{minimal}
 if  for each representation $R$ of $\Psi$, 
 \( \dim R_{min} \le \dim R \).
 Define the subspaces  
  \begin{eqnarray}
  \label{sect:pow:reachobs:eq1}
  W_{R}  &=&  \SPAN\{ A_{w}B_{j} \in \mathcal{X} \mid w \in X^{*}, |w| < n, j \in J\} \\    
  \label{sect:pow:reachobs:eq2}
     O_{R} &=&  \bigcap_{w \in X^{*}, |w| < n} \ker CA_{w}. 
  \end{eqnarray}
  We will say that the representation $R$ is \emph{reachable}
  if $\dim W_{R}=\dim R$, and we will say that $R$ is
  \emph{observable} if $O_{R}=\{0\}$.
  Let $R=(\mathcal{X},\{ A_{\sigma} \}_{\sigma \in X},B,C)$,
  \( \widetilde{R}=(\widetilde{\mathcal{X}},\{ \widetilde{A}_{\sigma} \}_{\sigma \in X}, \widetilde{B}, \widetilde{C}) \) be two 
  $\POWO-J$ rational representations. 
  A linear map
  $\MORPH: \mathcal{X} \rightarrow \widetilde{\mathcal{X}}$ is
  called a \emph{representation morphism},  
  and is denoted by $\MORPH:R \rightarrow \widetilde{R}$,
  if 
  \begin{equation}
     \begin{array}{rcl}
     \label{repr:morph:eq2}
     \MORPH A_{\sigma}=\widetilde{A}_{\sigma}\MORPH, \forall \sigma \in X, 
     & \MORPH B_{j}=\widetilde{B}_{j}, \forall j \in J,  &
     C=\widetilde{C}\MORPH
   \end{array}
  \end{equation}
     If $\MORPH$ is bijective, then it is called a representation isomorphism.
     If $\MORPH$
     is an isomorphism, then $\widetilde{R}$ and $R$ are
     representations of the same \RFSF, and 
     $R$ is observable (reachable)
     if and only if $\widetilde{R}$ is observable (reachable).
  \begin{Remark}
  \label{repr:state_space:rem}
Let $R$ be a 
representation of $\Psi$ of the form (\ref{repr:def0}), and consider
a linear isomorphism $\MORPH:\mathcal{X} \rightarrow \mathbb{\mathbb{R}}^{n}$, 
$n=\dim R$. Then
\(  \MORPH R=(\mathbb{\mathbb{R}}^{n}, \{ \MORPH A_{\sigma}\MORPH^{-1} \}_{\sigma \in X}, \MORPH B, C\MORPH^{-1}) \), 
where $\MORPH B=\{ \MORPH B_{j} \in \mathbb{R}^{n} \mid j \in J\}$
is a representation of $\Psi$ and it is isomorphic to $R$.
The representation $\MORPH R$ is defined on an Euclidean space and
its state-transition and readout maps can be viewed as matrices.
\end{Remark}

    \begin{Definition}[Hankel-matrix]
    \label{sect:pow:hank:def0}
    Define the \emph{Hankel-matrix} 
    $H_{\Psi}$ 
    of $\Psi$ as the
    infinite matrix, the rows of which are indexed
    by pairs $(v,i)$ where $v \in X^{*}$, $i=1,\ldots, \POWO$, and  
    the columns of which are indexed
    by $(w,j)$ where $w \in X^{*}$, $j \in J$. 
    The entry $\left[H_{\Psi}\right]_{(v,i),(w,j)}$ of $H_{\Psi}$ 
    indexed with the row index $(v,i)$ and the column index
    $(w,j)$ is defined as
    \begin{equation}
    \label{repr:eq3}
      \left[H_{\Psi}\right]_{(v,i)(w,j)}= \left[S_{j}(wv)\right]_{i}
    \end{equation}
    where $\left[S_{j}(wv)\right]_{i}$ denotes the $i$th entry of
    the vector $S_{j}(wv) \in \mathbb{R}^{\POWO}$.
    The rank of $H_{\Psi}$ is
    the dimension of the linear space spanned by the
    columns of $H_{\Psi}$, and it is denoted by $\Rank H_{\Psi}$.
 \end{Definition}   
 \begin{Theorem}[Existence and minimality, \cite{MP:Phd,MP:BigArticlePartI}]
 \label{sect:form:theo1}
 \begin{enumerate}
 \item
  The family $\Psi$ is rational,
   if and only if $\Rank H_{\Psi} <+\infty$.
 \item
 If $\Rank H_{\Psi} < +\infty$, then a minimal representation $R$ of $\Psi$ can be constructed from $H_{\Psi}$, see Procedure \ref{reprfromhankel}.
 \item
 Assume that $R_{min}$ is a representation of $\Psi$.
    Then $R_{min}$
    is a minimal representation of $\Psi$, if and only
    if  $R_{min}$ is reachable and observable.
    If $R_{min}$ is minimal, then $\Rank H_{\Psi}=\dim R_{min}$.
 \item
 All minimal representations of $\Psi$ are isomorphic.
 \item
  Any representation $R$ of $\Psi$ can be transformed to a 
  minimal representation $R_{min}$ of $\Psi$, see
  Procedure \ref{repr:constr:can}.
 \end{enumerate}
\end{Theorem}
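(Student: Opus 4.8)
The plan is to attach to every representation $R=(\mathcal{X},\{A_\sigma\}_{\sigma\in X},B,C)$ of $\Psi$ two canonical objects: the \emph{reachability subspace} $W_R=\SPAN\{A_wB_j\mid w\in X^{*},\ j\in J\}$ and the \emph{observability map} $\mathcal{O}_R:\mathcal{X}\to\mathbb{R}^{X^{*}\times\{1,\ldots,\POWO\}}$ given by $(\mathcal{O}_Rx)(v,i)=[CA_vx]_i$, whose kernel is $O_R$. Using $A_{wv}=A_vA_w$ (an immediate consequence of Notation~\ref{repr:not1}), the column of $H_{\Psi}$ indexed by $(w,j)$ is exactly $\mathcal{O}_R(A_wB_j)$; hence the column space of $H_{\Psi}$ equals $\mathcal{O}_R(W_R)$ and $\Rank H_{\Psi}=\dim\mathcal{O}_R(W_R)\le\dim W_R\le\dim R$. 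This already proves the ``only if'' direction of the first claim and, more importantly, the bound $\Rank H_{\Psi}\le\dim R'$ for \emph{every} realization $R'$, which will be used repeatedly. A routine stabilization argument — the chain $\SPAN\{A_wB_j\mid |w|\le k\}$ strictly increases at most $\dim R$ times — identifies the length-truncated subspaces in \eqref{sect:pow:reachobs:eq1}--\eqref{sect:pow:reachobs:eq2} with the ``all words'' versions above, so I may use $W_R$ and $O_R$ in either form.

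Assume now $\Rank H_{\Psi}=n<\infty$. I would build the representation of Procedure~\ref{reprfromhankel} directly on the state space $\mathcal{X}$ equal to the linear span of the columns of $H_{\Psi}$: take $B_j$ to be the column indexed $(\epsilon,j)$, let $C$ read off the entries of a column at rows $(\epsilon,1),\dots,(\epsilon,\POWO)$, and let $A_\sigma$ be the left shift $(A_\sigma c)(v,i)=c(\sigma v,i)$. One checks that $A_\sigma$ sends the column indexed $(w,j)$ to the column indexed $(w\sigma,j)$, hence preserves $\mathcal{X}$ and is well defined; and with these conventions $CA_wB_j=S_j(w)$, so this is a representation of $\Psi$, which proves the ``if'' direction of the first claim. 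This representation is reachable (because $A_wB_j$ is the column indexed $(w,j)$, and these span $\mathcal{X}$) and observable (because $CA_wc$ recovers the value of the column $c$ at row block $w$, so $\mathcal{O}_R$ is injective on $\mathcal{X}$).

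The heart of the matter is the minimality criterion. If $R$ is reachable and observable then $\Rank H_{\Psi}=\dim\mathcal{O}_R(W_R)=\dim\mathcal{X}=\dim R$, which together with $\Rank H_{\Psi}\le\dim R'$ for all $R'$ shows that $R$ is minimal and that $\dim R=\Rank H_{\Psi}$. Conversely, if $R$ is not reachable, restricting $R$ to the $A_\sigma$-invariant subspace $W_R$ (invariant since $A_\sigma A_wB_j=A_{w\sigma}B_j$, and containing each $B_j$) produces a strictly smaller representation of the same $\Psi$; if $R$ is not observable, passing to the quotient $\mathcal{X}/O_R$ (well defined because $O_R$ is $A_\sigma$-invariant, using $A_wA_\sigma=A_{\sigma w}$, and $O_R\subseteq\ker C$) produces a strictly smaller representation of $\Psi$. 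Hence minimality is equivalent to reachability together with observability; in particular the representation of the previous paragraph is minimal of dimension $\Rank H_{\Psi}$, which is the second claim.

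For uniqueness I would show that for \emph{any} reachable and observable $R$ the map $\mathcal{O}_R$ is a representation isomorphism from $R$ onto the canonical representation of the second paragraph: it is bijective onto the column space of $H_{\Psi}$ by observability and reachability, and it intertwines the structure maps, since $\mathcal{O}_RA_\sigma$ and $A^{\mathrm{can}}_\sigma\mathcal{O}_R$ both send $x$ to $(v,i)\mapsto[CA_{\sigma v}x]_i$, while $\mathcal{O}_RB_j$ is the column $(\epsilon,j)$ and $C=C^{\mathrm{can}}\mathcal{O}_R$. Then for two minimal representations $R,R'$ the composite $\mathcal{O}_{R'}^{-1}\circ\mathcal{O}_R$ is a representation isomorphism, which is the fourth claim. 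Finally the fifth claim is Procedure~\ref{repr:constr:can}: given an arbitrary $R$, first restrict to $W_R$ to obtain a reachable realization, then quotient the result by its own observability subspace to obtain a reachable \emph{and} observable — hence, by the third claim, minimal — realization, the quotient staying reachable because the quotient map is onto. I expect no deep obstacle here, only the bookkeeping needed to keep the two shift directions consistent with the reversed product convention of Notation~\ref{repr:not1} (so that $S_j(w)=CA_wB_j$ really comes out with $w$ and not its reverse) and the checking that every restriction, quotient, and shift operator is well defined on the relevant subspace or quotient; the one genuinely non-cosmetic ingredient is the stabilization lemma that allows ``$|w|<n$'' to be replaced by ``all $w$'' in the definitions of $W_R$ and $O_R$.
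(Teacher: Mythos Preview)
The paper does not supply its own proof of Theorem~\ref{sect:form:theo1}: the theorem is quoted as a background result from \cite{MP:Phd,MP:BigArticlePartI}, with the paper only restating the relevant Procedures~\ref{reprfromhankel}--\ref{repr:constr:can}. Your proposal is a correct and self-contained proof, and it is precisely the standard argument one finds in those references: factor the Hankel matrix through any representation via the observability map, build the canonical shift realization on the column space, characterize minimality by reachability plus observability, and get uniqueness by showing every minimal representation is isomorphic to the canonical one through $\mathcal{O}_R$. Your bookkeeping with the reversed product convention $A_{wv}=A_vA_w$ and the left shift $(A_\sigma c)(v,i)=c(\sigma v,i)$ is consistent, and the stabilization argument you flag is exactly what reconciles the ``$|w|<n$'' definitions in \eqref{sect:pow:reachobs:eq1}--\eqref{sect:pow:reachobs:eq2} with the ``all $w$'' versions. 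In short, there is nothing to compare against in the present paper, and your argument matches the classical proof that the cited sources contain.
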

 We conclude by presenting procedures for reachability and observability reduction, minimization of representations and construction of a representation from
 the Hankel-matrix.  
In the sequel, $R$ is a representation of $\Psi$ and $R$ is of the form \eqref{repr:def0}.
\begin{Procedure}[Repr. from Hankel-matrix, \cite{MP:Phd,MP:BigArticlePartI}]
\label{reprfromhankel}
  If $\Rank H_{\Psi} < +\infty$, then 
   \[ R_{\Psi}=(\IM H_{\Psi},\{A_{\sigma}\}_{\sigma \in X},B,C) \]
   is a representation of $\Psi$.
   Here, for each $\sigma \in X$, 
   $A_{\sigma}$ is the linear map which maps every column of
   $H_{\Psi}$ indexed by $(w,j)$ to the column indexed
   by $(w\sigma,j)$.
    The initial states are
    $B=\{ B_{j} \mid j \in J \}$, where
    $B_{j}$ is the column of
    $H_{\Psi}$ indexed by $(\epsilon,j)$, $j \in J$.
    Finally, $C$ is a linear map which maps every column of $H_{\Psi}$
    to the vector formed by those rows of this
    columns which are indexed by $(\epsilon,1),\ldots, (\epsilon,\POWO)$. Recall that $\mathbb{R}^{\POWO}$ is set of
   coefficients of the formal power series $S_j$ of $\Psi$, $j \in J$, i.e. $S_j:X^{*} \rightarrow \mathbb{R}^{\POWO}$.
\end{Procedure}
\begin{Procedure}[Reachability Reduction]
\label{repr:constr:reach}
 Assume $R$ is a representation of $\Psi$ and it is of the form \eqref{repr:def0}.
 Recall the definition of the reachable subspace $W_{R}$
 of $R$ from (\ref{sect:pow:reachobs:eq1}).
 Define the representation
\(    R_{r}=(W_{R},\{A_{\sigma}^{r}\}_{\sigma \in X},B^{r},C^{r}) \), 
 where for each $\sigma \in X$,
 $A_{\sigma}^{r}$ is the restriction of $A_{\sigma}$ to
 $W_{R}$, 
 $B^{r}=\{ B_{j} \in \mathcal{X} \mid j \in J \}=B$,
 and $C^{r}$ is the restriction of
 $C$ to $W_{R}$.
 Then $R_r$ is a reachable representation of $\Psi$.
\end{Procedure}
\begin{Procedure}[Observability Reduction]
\label{repr:constr:obs}
 Assume $R$ is a representation of $\Psi$ and it is of the form \eqref{repr:def0}.
 Recall from (\ref{sect:pow:reachobs:eq2}) the definition of
 the observability subspace $O_{R}$. 
Define the 
 representation
 \( R_{o}=(\mathcal{X}/O_{R_{r}}, \{\widetilde{A}_{\sigma}\}_{\sigma \in X}, \widetilde{B},\widetilde{C})  \).
 Here
 $\mathcal{X}/O_{R}$ is the quotient space of
 $\mathcal{X}$ with respect to $O_{R}$. Denote by
 $[x]$, $x \in \mathcal{X}$ the equivalence class of all those $y \in \mathcal{X}$ such that $x-y \in O_{R}$.
 Then
 $\widetilde{A}_{\sigma}[x]=[A_{\sigma}x]$, $\sigma \in X$, 
 $\widetilde{C}[x]=Cx$ for all $x \in \mathcal{X}$, and $\widetilde{B}=\{ \widetilde{B}_{j} \in \mathcal{X}/O_{R} \mid j \in J\}$ is such that
 $\widetilde{B}_{j}=[B_{j}]$, $j \in J$.
 Then $R_o$ is an observable representation of $\Psi$ and
 if $R$ is reachable, then so is $R_o$.
 \end{Procedure}
\begin{Procedure}[Minimization]
\label{repr:constr:can}
  A representation $R$ of $\Psi$ can be converted to a minimal
  representation as follows.
  Use Procedure \ref{repr:constr:reach} to obtain a
  reachable representation $R_{r}$. 
  Apply Procedure \ref{repr:constr:obs} to
  $R_r$ and  obtain the observable representation $R_{min}=(R_{r})_{o}$.
  Then  $R_{min}$ is a minimal representation of $\Psi$.
\end{Procedure}
 If $J$ is finite, then
 Procedures \ref{reprfromhankel}, \ref{repr:constr:reach}, \ref{repr:constr:obs},
 and \ref{repr:constr:can} can be implemented, see
 \cite{MP:Phd}.
 
More precisely, we can formulate a realization algorithm for
rational representations, \cite{MP:PartTechRep}.
Below we present slight extension of
the results of 
\cite{Son:Resp,Son:Real,isi:tac} on realization algorithms for
formal power series. 
The proofs of the results can be found in \cite{MP:Phd,MP:PartTechRep}.
We introduce the following notation.
Let $K, M  \in \mathbb{N}$.
\begin{equation}
 \begin{split}
 \mathbb{I}_{M}&=\{(v,i) \mid v \in X^{*}, |v| \le M,i=1,\ldots,p \} \\
 \mathbb{J}_{K}&=\{ (w,j) \mid j \in J, w \in X^{*}, |w| \le K \}
\end{split}
\end{equation}
 Intuitively, the elements of $\mathbb{I}_M$ (resp. $\mathbb{J}_K$) are those
 row (column) indices of $H_{\Psi}$, the $X^{*}$-valued component of 
 which is of length at most $M$ (resp. $K$).
\begin{Definition}
\label{part_real_pow:submatrix}
Define the matrix $H_{\Psi,M,K}$ as the matrix, rows of
 which are indexed by the elements of $\mathbb{I}_M$, columns of which are indexed
 by the
 elements of $\mathbb{J}_K$, and its entry $(H_{\Psi,M,K})_{(v,i),(w,j)}$
 indexed by the row index $(v,i) \in \mathbb{I}_M$ and the column index 
 $(w,j) \in \mathbb{J}_K$ is defined as
\( (H_{\Psi,M,K})_{(v,i),(w,j)}=(H_{\Psi})_{(v,i),(w,j)}=(S_{j}(wv))_{i}
\).
The rank of $H_{\Psi,M,K}$, denoted by $\Rank H_{\Psi,M,K}$, is the dimension of the linear space spanned by its columns.
\end{Definition}
That is, $H_{\Psi,M,K}$ is the sub-matrix of $H_{\Psi}$ formed
by the intersection of the columns indexed by the 
elements of  $\mathbb{J}_K$ 
and of the rows indexed by the elements of $\mathbb{I}_M$. If
$J$ is finite, then $H_{\Psi,M,K}$ is a \emph{finite matrix}.
\begin{Theorem}[Realization algorithm, \cite{MP:PartTechRep}]
\label{part_real_pow:theo}
   If
   \(  \Rank H_{\Psi,N,N}=\Rank H_{\Psi} \), 
   then the representation $R_{N}$, to be defined below,
    is a minimal representation of $\Psi$.
   If $\Rank H_{\Psi} \le N+1$, then  \( \Rank H_{\Psi,N,N}=\Rank H_{\Psi} \) holds.
   The representation $R_N$ is of the form
      (\ref{repr:def0}), with the
      state-space $\mathcal{X}=\IM H_{\Psi,N,N+1}$, and such that
      if we denote by $\COL_{w,j}$ the column of $\IM H_{\Psi,N,N}$
      indexed by $(w,j) \in \mathbb{J}_N$, then
      \[
      \begin{split}
      & \forall (w,j) \in \mathbb{J}_N: A_{\sigma}(\COL_{w,j})=\COL_{w\sigma,j} \\
      & \forall (w,j) \in \mathbb{J}_N:  
           C(\COL_{w,j})=\begin{bmatrix} \COL_{w,j}((\epsilon,1)), & \cdots, & \COL_{w,j}((\epsilon,p)) \end{bmatrix}^{T} \\
      & \forall j \in J: B_{j}=\COL_{\epsilon,j}
      \end{split}
      \]
     Here $C_{w,j}((\epsilon,i))$ is the entry of the column
     $\COL_{w,j}$ indexed by $(\epsilon,i)$.
     i.e.  it equals $(H_{\Psi,N,N})_{(\epsilon,i),(w,j)}$,
     $i=1,\ldots,p$.
\end{Theorem}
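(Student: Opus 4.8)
The plan is to build a representation isomorphism between a minimal representation of $\Psi$ and $R_N$, using the factorization of $H_\Psi$ induced by any representation together with the rank hypothesis. First I would record the elementary identity that any representation $R=(\mathcal{X},\{A_\sigma\}_{\sigma\in X},B,C)$ of $\Psi$ yields a factorization $H_\Psi=\mathcal{O}_R\mathcal{R}_R$, where the column of $\mathcal{R}_R$ indexed by $(w,j)$ is $A_wB_j\in\mathcal{X}$ and the block row of $\mathcal{O}_R$ indexed by $v$ is the map $x\mapsto CA_vx$; this follows from \eqref{repr:def1} and Notation \ref{repr:not1}, since $A_{wv}=A_vA_w$ and hence $[H_\Psi]_{(v,i),(w,j)}=[S_j(wv)]_i=[CA_v\,A_wB_j]_i$. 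Restricting the rows to $|v|\le N$ and the columns to $|w|\le N$ (resp.\ $|w|\le N+1$) yields $H_{\Psi,N,N}=\mathcal{O}_N\mathcal{R}_N$ (resp.\ $H_{\Psi,N,N+1}=\mathcal{O}_N\mathcal{R}_{N+1}$) with the obvious truncated factors. I would then apply this with $R=R_{min}$, a minimal representation of $\Psi$: it exists because $\Rank H_{\Psi,N,N}$ is finite (the matrix has finitely many rows) and hence, by the hypothesis $\Rank H_{\Psi,N,N}=\Rank H_\Psi$, so is $\Rank H_\Psi$; by Theorem \ref{sect:form:theo1} we then have $n:=\dim R_{min}=\Rank H_\Psi$, and $R_{min}$ is both reachable and observable.

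The key step is to read off the consequences of the rank hypothesis. Since $H_{\Psi,N,N}=\mathcal{O}_N\mathcal{R}_N$ factors through the $n$-dimensional space $\mathcal{X}$ and has rank $n$, both $\mathcal{O}_N$ and $\mathcal{R}_N$ must have rank exactly $n$; thus $\mathcal{O}_N$ is injective and $\mathcal{R}_N$ is surjective. It follows that $\IM H_{\Psi,N,N}=\mathcal{O}_N(\IM\mathcal{R}_N)=\mathcal{O}_N(\mathcal{X})$ is $n$-dimensional, and since it is contained in $\IM H_{\Psi,N,N+1}$ while the latter has rank at most $\Rank H_\Psi=n$, we get $\IM H_{\Psi,N,N}=\IM H_{\Psi,N,N+1}$; so the state space $\mathcal{X}_{R_N}=\IM H_{\Psi,N,N+1}$ indeed contains every column $\COL_{w,j}$ with $|w|\le N+1$, and $\phi:=\mathcal{O}_N$ restricts to a linear isomorphism $\mathcal{X}\to\mathcal{X}_{R_N}$ with $\phi(A_wB_j)=\COL_{w,j}$ for every word $w$ with $|w|\le N+1$ and every $j\in J$. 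Well-definedness of the transition maps of $R_N$ is now immediate: whenever $\sum_k c_k\COL_{w_k,j_k}=0$ with $(w_k,j_k)\in\mathbb{J}_N$, injectivity of $\phi$ gives $\sum_k c_kA_{w_k}B_{j_k}=0$, so applying $A_\sigma$ and then $\phi$ yields $\sum_k c_k\COL_{w_k\sigma,j_k}=0$, which is exactly the compatibility needed for $A_\sigma$ of $R_N$ to be a well-defined linear map $\mathcal{X}_{R_N}\to\mathcal{X}_{R_N}$; the readout map of $R_N$ (a coordinate projection, defined on all of the ambient row space) and the initial states $B_j^{R_N}=\COL_{\epsilon,j}$ need no such check.

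Finally I would verify that $\phi:R_{min}\to R_N$ satisfies the morphism conditions \eqref{repr:morph:eq2}. For the initial states, $\phi(B_j)=\mathcal{O}_N(A_\epsilon B_j)=\COL_{\epsilon,j}=B_j^{R_N}$. For the readout, the $(\epsilon,i)$-entry of $\COL_{w,j}$ is $[S_j(w)]_i=[CA_wB_j]_i$, so $C^{R_N}\phi(A_wB_j)=C(A_wB_j)$, and since the vectors $A_wB_j$ with $|w|\le N$ span $\mathcal{X}$ (surjectivity of $\mathcal{R}_N$), $C^{R_N}\phi=C$. For the transition maps, $\phi(A_\sigma(A_wB_j))=\phi(A_{w\sigma}B_j)=\COL_{w\sigma,j}=A_\sigma^{R_N}(\COL_{w,j})=A_\sigma^{R_N}\phi(A_wB_j)$, again on a spanning set, so $\phi A_\sigma=A_\sigma^{R_N}\phi$. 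Hence $\phi$ is a representation isomorphism, so $R_N$ is a representation of $\Psi$ of dimension $n=\Rank H_\Psi$, and therefore minimal by Theorem \ref{sect:form:theo1}. It remains to prove the sufficient condition: if $\Rank H_\Psi=n\le N+1$, then $n-1\le N$, so $W_{R_{min}}=\SPAN\{A_wB_j\mid |w|<n\}\subseteq\IM\mathcal{R}_N$ and $\ker\mathcal{O}_N\subseteq\bigcap_{|w|<n}\ker CA_w=O_{R_{min}}$ (cf.\ \eqref{sect:pow:reachobs:eq1}, \eqref{sect:pow:reachobs:eq2}); reachability and observability of $R_{min}$ force $\mathcal{R}_N$ surjective and $\mathcal{O}_N$ injective, whence $\Rank H_{\Psi,N,N}=\Rank(\mathcal{O}_N\mathcal{R}_N)=n=\Rank H_\Psi$. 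I expect the only genuine obstacle to be the index bookkeeping — checking that the column space stabilizes between the $(N,N)$ and $(N,N+1)$ truncations, and that the spanning/annihilating families at word length $\le N$ are already exhaustive — since once the factorization $H_{\Psi,N,N}=\mathcal{O}_N\mathcal{R}_N$ and the rank hypothesis are in hand, everything reduces to linear algebra.
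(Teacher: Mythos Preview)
The paper does not contain its own proof of Theorem \ref{part_real_pow:theo}; it is quoted from \cite{MP:PartTechRep} (see the sentence preceding Definition \ref{part_real_pow:submatrix}: ``The proofs of the results can be found in \cite{MP:Phd,MP:PartTechRep}''). So there is no in-paper argument to compare against.

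That said, your proposal is correct and follows what is essentially the standard route for this type of partial-realization theorem: factor the finite Hankel block through a minimal representation, use the rank hypothesis to force the truncated observability and reachability maps to be injective and surjective respectively, and then transport the structure of $R_{min}$ to $\IM H_{\Psi,N,N+1}$ via the resulting isomorphism $\phi=\mathcal{O}_N$. Your check that $\IM H_{\Psi,N,N}=\IM H_{\Psi,N,N+1}$ (so that the shift $\COL_{w,j}\mapsto\COL_{w\sigma,j}$ really lands in the declared state space) is the one point where an unwary reader might stumble, and you handle it cleanly via $\IM H_{\Psi,N,N+1}=\mathcal{O}_N(\IM\mathcal{R}_{N+1})\subseteq\mathcal{O}_N(\mathcal{X})=\IM H_{\Psi,N,N}$. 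The argument for the sufficient condition $\Rank H_\Psi\le N+1\Rightarrow\Rank H_{\Psi,N,N}=\Rank H_\Psi$ via \eqref{sect:pow:reachobs:eq1}--\eqref{sect:pow:reachobs:eq2} is also the natural one. I see no gaps; only the remark that your appeal to ``finitely many rows'' to get $\Rank H_{\Psi,N,N}<\infty$ tacitly uses that the alphabet $X$ is finite, which is part of the standing hypotheses of \S\ref{sect:pow}.
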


\section{Proof of the main results}
\label{sect:proof}
 The proof of the results on realization theory relies on
 the relationship between formal power series representations
 and \BSLSS state-space representations. This relationship
 is completely analogous to the one for linear switched systems in
 continuous time, \cite{MP:BigArticlePartI,MP:RealForm}.

 Consider an input-output map $f$ and assume that
  $f$ has a \GCR.
 Below we define the \emph{\RFSF $\Psi_{f}$ associated with $f$}. We
 also define the \emph{representation $R_{\Sigma}$ associated with 
 a \BLSS $\Sigma$} and a \emph{\BLSS $\Sigma_R$ associated with a rational representation $R$}. These notions allow us to relate \RFSF
 and input-output maps and to relate \BLSS with rational 
 representations. In turn, these correspondences enable us to
 translate the realization problem for \BLSS to the problem  of
 rationality of \NFSF.

We first define the \RFSF\ associated with $f$.
 To this end, recall the definition \eqref{eq:MarkovParam} of the Markov-parameters of $f$. 
\begin{Definition}[\RFSF\ associated with $f$]
  For each $q\in Q$, each index $j=1,\ldots, m$,  define
  the formal power series
  $\mathbb{S}_{q,j},\mathbb{S}_{0} \in \Re^{pD}\ll Q^{*} \gg$ 
  as follows; for each word $w \in Q^{*}$, discrete mode
  $q \in Q$ and index $j=1,\ldots,m$,
  \begin{equation}
  \label{sect:real:form-1}
  \begin{split}
    & \mathbb{S}_{(q,j)}(w )=\begin{bmatrix}
       (S^f_{j}(qw1))^{T}, & (S^f_{j}(qw2))^T, & \cdots, & (S^f_{j}(qw\QNUM ))^{T}
       \end{bmatrix}^{T},  \\ 
    & \mathbb{S}_{0}(w)=\begin{bmatrix}
                 (S_0^f(w1))^{T}, & (S_0^f(w2))^T, & \cdots, & (S_0^f(w\QNUM))^T
	       \end{bmatrix}^T.	 
    \end{split}
  \end{equation}	       
 Let  $J_{f}=\{0\} \cup \{ (q,l) \mid q \in Q, l=1,\ldots,m\}$ 
  and  define the \emph{\RFSF associated with $f$} by 
    \begin{equation}
    \label{sect:real:form-2}
    \Psi_{f}=\{ \mathbb{S}_{j} \in \mathbb{R}^{p \QNUM} \ll Q^{*}\gg \mid j \in J_{f} \}.
     \end{equation}
\end{Definition}
 Notice that 
 the values of $\mathbb{S}_{(q,j)}(w)$ and $S_0^f(w)$ 
 are obtained by stacking up
 the Markov-parameters of $S^f_{j}(qwi)$ and $S_0^f(wi)$ respectively, for $i=1,\ldots, \QNUM$.
  Next, we define the representation
  $R_{\Sigma}$ associated with $\Sigma$.
\begin{Definition}
  \label{sect:real:const1}
  Assume that $\Sigma$ is of the form (\ref{lin_switch0}).
  Define the 
  \emph{representation $R_{\Sigma}$ associated with $\Sigma$} as a 
  $p|Q|-J_f$ representation of the form \eqref{repr:def0}, where
  $J_f=\{0\} \cup Q \times \{1,\ldots,m\}$ and the following holds.
  \begin{itemize}
  \item
  The alphabet $X$ of $R_{\Sigma}$ is the set of discrete modes $Q$, and
  $\POWO=p|Q|$.
  \item
  The state-space $\mathcal{X}$ of $R_{\Sigma}$ is the same as that of
  $\Sigma$, i.e. $\mathcal{X}=\mathbb{R}^n$.
  For each $q \in Q$, the state-transition matrix $A_{q}$ of $R_{\Sigma}$ is identical to the
  matrix $A_{q}$ of $\Sigma$. 
  \item
  The $p|Q| \times n$ readout matrix $C$ 
   is obtained by vertically
   "stacking up" the matrices $C_{1},\ldots, C_{\QNUM}$, i.e.
   \[ C=\begin{bmatrix} 
     C_{1}^{T}, & C_{2}^{T}, & \cdots, & C_{\QNUM}^{T} \end{bmatrix}^{T} \in \mathbb{R}^{p\QNUM \times n}.
   \]
   \item
   \( B=\{ B_{j} \in \mathcal{X} \mid j \in J_{f}\} \), where
   $B_{0}=x_0$ and 
   $B_{(q,l)}$ is the $l$th column of the matrix $B_q$ of
   $\Sigma$.
  \end{itemize}
 \end{Definition}  
   The intuition behind the definition of $R_{\Sigma}$ is
   that
   we would like $R_{\Sigma}$ to be a representation
   of $\Psi_{f}$ if and only if
   (\ref{sect:real:pow:eq3}) holds.
   Then the $A_{q}$ matrices of the representation $R_{\Sigma}$
   should coincide with the $A_{q}$ matrices of $\Sigma$.
   The initial states of $R_{\Sigma}$
   should be formed by the vector $B_{0}$ (in order to generate
   $\mathbb{S}_{0}$), and $B_{q}e_{j}$ (in order to generate $\mathbb{S}_{(q,j)}$).
   Finally, the readout map $C$ should be formed
   by "stacking up" the matrices $C_{q}$.
    Next, we define a \BLSS $\Sigma_R$ based on a representation $R$.
   \begin{Definition}
   \label{sect:real:const2}
   Consider a $p|Q|-J_f$ representation $R$ of the form
  \eqref{repr:def0}, over the alphabet $X=Q$ with $\POWO=p|Q|$.
    If  $\mathcal{X}=\mathbb{R}^{n}$ does not
    hold, then replace $R$ with the isomorphic copy
    $\MORPH R$ defined in Remark
    \ref{repr:state_space:rem} whose state-space is 
    $\mathbb{R}^{n}$. 
    In the rest of the construction,  we
    assume that $\mathcal{X}=\mathbb{R}^{n}$ for $n=\dim \mathcal{X}$ holds  and that $A_{q}$, $q \in Q$ are $n \times n$ matrices, and
    $C$ is a $p|Q| \times n$ matrix.
    Define the \emph{\BLSS $\Sigma_{R}$ associated with R}
    as follows. Let $\Sigma_{R}$ be of the form 
    (\ref{lin_switch0})  such that
    \begin{itemize}
    
   \item
    for $q \in Q$, the matrix $A_{q}$ of $\Sigma_{R}$ is identical to
    the state-transition matrix $A_{q}$ of $R$.
   \item
    For each $q \in Q$, the matrix $C_{q}$ is
    formed
    by the rows $(q-1)p+1, (q-1)p+2, \ldots, qp$ of $C$, i.e.
   \[
      C=\begin{bmatrix} C_{1}^{T}, & C_{2}^{T},  &  \cdots, & C_{\QNUM}^{T} \end{bmatrix}^{T}. 
     \]
    \item
    For each $q \in Q$, 
    $B_q=\begin{bmatrix} B_{(q,1)}, & \cdots & B_{(q,m)} \end{bmatrix}$.
    The initial state $x_0$ of $\Sigma_R$ is defined as
    $x_0=B_{0}$.
    \end{itemize}
  \end{Definition}  
   
    The intuition behind the definition of $\Sigma_R$ is the following.
    We would like $\Sigma_{R}$ to be such that if
    we apply Definition \ref{sect:real:const1} to it, then
    the resulting representation $R_{\Sigma_{R}}$ 
    should be close to $R$. 

The relationship between the various concepts introduced above
is as follows.
\begin{Theorem}
\label{sect:real:theo1}
     \begin{enumerate}
     \item
     \label{Part0pf}
      The Hankel-matrix $H_{\Psi_f}$ equals the Hankel-matrix $H_f$ of $f$.
     \item
     \label{Part01pf}
      The representations $R$ and $R_{\Sigma_R}$ are
      isomorphic, and $\Sigma_{R_{\Sigma}}=\Sigma$.
     \item
     \label{Part1pf}
      The \BLSS $\Sigma$ is a realization of the input-output map 
     $f$ if and only if 
     the associated representation
     $R_{\Sigma}$
    is a representation of $\Psi_{f}$.
    \item
   \label{Part2pf}
    The representation $R$
    is a representation of $\Psi_{f}$ if and only if 
    the associated \BLSS
    $\Sigma_{R}$ is a realization  of $f$.
   \item
   \label{Part3pf}
     The \BLSS $\Sigma$ is a minimal realization of the input-output map
     $f$ if and only if 
     the associated representation
     $R_{\Sigma}$ is a minimal representation of
     $\Psi_{f}$. 
   \item
   \label{Part4pf}
    The representation $R$ is a minimal
     representation of $\Psi_{f}$ if and only if 
     the associated \BLSS $\Sigma_{R}$ is a minimal realization of $f$.
   \item \label{Part5pf}
   The \BLSS $\Sigma$ is span-reachable (observable) if and only if
   the associated representation 
     $R_{\Sigma}$ is reachable (resp. observable).
   \item \label{Part6pf}
 
   The representation 
   $R$ is reachable (observable) if and only if 
   the associated \BLSS $\Sigma_{R}$ is \WR\ (resp. observable).

  \item
   \label{Part7pf}
  Assume that $\Sigma_1$ and $\Sigma_2$ are two \BSLSS
  with the state-spaces $\mathbb{R}^{n}$ and $\mathbb{R}^{n_a}$ respectively.
  A matrix $\MORPH \in \mathbb{R}^{n_a \times n}$ 
  is a \BLSS  morphism $\MORPH:\Sigma_1 \rightarrow \Sigma_2$ 
  if and only if
  $\MORPH:R_{\Sigma_1} \rightarrow R_{\Sigma_2}$ is a 
  representation morphism, if $\MORPH$ is interpreted as a linear map.
  \end{enumerate}
 \end{Theorem}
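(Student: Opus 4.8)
The plan is to prove the nine items of Theorem~\ref{sect:real:theo1} essentially in the order listed, since each later item reduces to the earlier ones. The backbone is a single computational identity: if $\Sigma$ is a \LSS\ and $R_\Sigma$ is the associated representation, then for every $w \in Q^{*}$ and every $q \in Q$, $j \in \{1,\ldots,m\}$,
\begin{equation*}
 C A_w B_0 = \mathbb{S}_0(w) \quad\text{and}\quad C A_w B_{(q,j)} = \mathbb{S}_{(q,j)}(w)\quad\text{hold }\iff\text{ \eqref{sect:io:lemma1:eq1} holds for }\Sigma .
\end{equation*}
This follows by unwinding the definitions: by Notation~\ref{repr:not1}, $A_w = A_{q_k}\cdots A_{q_1}$ for $w = q_1\cdots q_k$, so the $i$th block of $CA_wB_0$ is $C_i A_w x_0$, which by \eqref{sect:real:form-1} must match $S_0^f(wi)$; comparing with Lemma~\ref{sect:io:lemma1}'s formula $S_0^f(vi) = C_i A_v x_0$ gives exactly the stated equivalence, and similarly for the input responses using $B_{(q,j)} = B_q e_j$. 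Part~\ref{Part1pf} then follows immediately by combining this identity with Lemma~\ref{sect:io:lemma1} (note that $\Sigma$ being a realization of $f$ presupposes $f$ has a \GCR, which is the standing hypothesis under which $\Psi_f$ is defined). Part~\ref{Part5pf} follows by observing that the reachability subspace $W_{R_\Sigma}$ from \eqref{sect:pow:reachobs:eq1} equals $\IM \mathcal{R}(\Sigma)$ and the observability subspace $O_{R_\Sigma}$ from \eqref{sect:pow:reachobs:eq2} equals $\ker O(\Sigma)$ — this is a matching-of-blocks argument using $\widetilde B = [x_0, B_1,\ldots,B_\QNUM]$ and $\widetilde C = [C_1^T,\ldots,C_\QNUM^T]^T$ together with Notation~\ref{repr:not1} — and then invoking Theorem~\ref{sect:problem_form:reachobs:prop1} and the definitions of reachable/observable representation.

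For Part~\ref{Part0pf} I would compare the entrywise definitions directly: by Remark~\ref{alt:hank}, $[H_f]_{(v,i),(w,(q,z))} = [S_z^f(qwv\alpha_i)]_l$ where $i = pK+l$ and $\alpha_i = K+1$, while by Definition~\ref{sect:pow:hank:def0}, $[H_{\Psi_f}]_{(v,i),(w,(q,z))} = [\mathbb{S}_{(q,z)}(wv)]_i$; by \eqref{sect:real:form-1} the $i$th entry of $\mathbb{S}_{(q,z)}(wv) \in \mathbb{R}^{p\QNUM}$ is precisely the $l$th entry of $S_z^f(qwv(K+1))$, so the two agree, and likewise for the $j=0$ columns. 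This is bookkeeping, but care is needed to align the two indexing conventions (the $p\QNUM$-dimensional coefficient space of $\Psi_f$ versus the block-row structure of $H_f$); I expect this to be the fussiest part. Part~\ref{Part01pf} is verified by direct substitution: starting from $R$, Definition~\ref{sect:real:const2} extracts $C_q, B_q, x_0$, and feeding these back through Definition~\ref{sect:real:const1} reassembles $C = [C_1^T,\ldots,C_\QNUM^T]^T$, $B_{(q,l)} = B_q e_l$, $B_0 = x_0$ and keeps $A_q$ unchanged — so one recovers $R$ up to the isomorphism $\MORPH$ used to move the state-space to $\mathbb{R}^n$; the identity $\Sigma_{R_\Sigma} = \Sigma$ is immediate since no change of state-space is involved when starting from a \LSS.

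Parts~\ref{Part2pf} and~\ref{Part6pf} follow from Parts~\ref{Part1pf}, \ref{Part5pf} and~\ref{Part01pf}: given a representation $R$, apply the already-proved equivalences to $\Sigma_R$ and use $R \cong R_{\Sigma_R}$ together with the fact (stated after \eqref{repr:morph:eq2}) that isomorphic representations represent the same \RFSF\ and share reachability/observability. Part~\ref{Part7pf} is a direct check that the morphism conditions of Definition~\ref{sect:problem_form:lin:morphism} ($\MORPH x_0 = x_0^a$, $A_q^a\MORPH = \MORPH A_q$, $B_q^a = \MORPH B_q$, $C_q^a\MORPH = C_q$) are, block by block, exactly the conditions \eqref{repr:morph:eq2} for $\MORPH : R_{\Sigma_1}\to R_{\Sigma_2}$, using that $C$ and $B$ of $R_{\Sigma_i}$ are the stacked/columnwise assemblies of the $C_q$ and $B_q$. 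Finally, Parts~\ref{Part3pf} and~\ref{Part4pf} on minimality: minimality of $\Sigma$ as a realization of $f$ means smallest $\dim$ among all \LSS\ realizations; by Parts~\ref{Part1pf}, \ref{Part2pf}, \ref{Part01pf} the assignments $\Sigma \mapsto R_\Sigma$ and $R \mapsto \Sigma_R$ are dimension-preserving mutually-inverse (up to isomorphism) bijections between \LSS\ realizations of $f$ and representations of $\Psi_f$, so minimal objects correspond to minimal objects. The main obstacle I anticipate is not conceptual but notational: keeping the three indexing schemes — the letters/words of $Q^{*}$, the $J_f$-indexed families, and the $p\QNUM$-block row/column structure — consistent throughout, especially in Part~\ref{Part0pf}; everything else is a routine unwinding of Notation~\ref{repr:not1} and Lemma~\ref{sect:io:lemma1}.
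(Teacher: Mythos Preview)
Your proposal is correct and follows essentially the same approach as the paper: the paper also reduces Part~\ref{Part1pf} to Lemma~\ref{sect:io:lemma1} via the identity \eqref{sect:real:pow:eq3}, derives Parts~\ref{Part2pf} and~\ref{Part6pf} from Parts~\ref{Part1pf} and~\ref{Part5pf} using $R \cong R_{\Sigma_R}$, proves Part~\ref{Part5pf} by showing $W_{R_\Sigma}=\IM\mathcal{R}(\Sigma)$ and $O_{R_\Sigma}=\ker O(\Sigma)$, handles Part~\ref{Part7pf} by a block-by-block comparison of the morphism conditions, and obtains Parts~\ref{Part3pf}--\ref{Part4pf} from the dimension-preserving correspondence. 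Your treatment of Part~\ref{Part0pf} is in fact more detailed than the paper's (which simply declares it ``Straightforward''), and your anticipated notational care with the $p\QNUM$-block indexing is exactly the right concern.
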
 
 The statements of Theorem \ref{sect:real:theo1} above are summarized
 in Table \ref{tab:Summary}.

\begin{proof}[Proof of \ref{sect:real:theo1}]
\textbf{Proof of Part \ref{Part0pf}.} 
 Straightforward.

\textbf{Proof of Part \ref{Part01pf}.}
 Straightforward.

\textbf{Proof of Part \ref{Part1pf} and Part \ref{Part2pf}. }
 The proof is analogous to the proof of Theorem 10 from
 \cite{MP:BigArticlePartI}.
  First, note that if $R$ is a representation of $\Psi_{f}$, then $R$ satisfies the assumptions of Definition \ref{sect:real:const2}.  
Since $R$ is isomorphic to $R_{\Sigma_{R}}$, Part \ref{Part2pf} follows 
 from Part \ref{Part1pf}. Part \ref{Part1pf} follows by noticing that
 $\Sigma$ is a realization of $f$, if and only if
  for all $q_0 \in Q$, $j=1,\ldots,m$, $w \in Q^{*}$,
\begin{equation}
\label{sect:real:pow:eq3}
\begin{split}
& \mathbb{S}_{(q_0,j)}(w)=
    \begin{bmatrix} C_{1}^{T}, & C_{2}^{T}, & \cdots, & C_{\QNUM}^{T}
    \end{bmatrix}^{T}
    A_{w}B_{q_{0}}e_{j}  \mbox{ and } \\
    & \mathbb{S}_{0} (w)=
    \begin{bmatrix} C_{1}^{T}, & C_{2}^{T}, & \cdots, & C_{\QNUM}^{T}
    \end{bmatrix}^{T} 
    A_{w}x_0. 
 \end{split}
\end{equation}
  The above statement follows from Lemma \ref{sect:io:lemma1}, by 
  taking into account the definition of $\mathbb{S}_0$ and $\mathbb{S}_{(q_0,j)}$.
But \eqref{sect:real:pow:eq3} is equivalent to 
$R_{\Sigma}$ being a representation of $\Psi_f$.
Indeed, the matrix 
\( \begin{bmatrix}
     C_{1}^{T} & C_{2}^{T}, & \cdots & C_{\QNUM}^{T} \end{bmatrix}^{T}
   \)
in the right-hand side of \eqref{sect:real:pow:eq3} equals 
the readout matrix $C$ of $R_{\Sigma}$, and the vectors $B_{q_0}e_j$ and $x_0$ coincide with the initial states $B_{(q_0,j)}$ and $B_{0}$ of
$R_{\Sigma}$. Hence, \eqref{sect:real:pow:eq3} in fact says that
$\mathbb{S}_{j}(w)=CA_{w}B_j$ for all $w \in Q^{*}$,
$j \in J_f$, i.e. that $R_{\Sigma}$ is a representation of $\Psi_f$.

\textbf{Proof of Part \ref{Part3pf} and Part \ref{Part4pf}.}
Follows from Part \ref{Part1pf} and Part \ref{Part2pf}, by
 noticing that $\dim \Sigma=\dim R_{\Sigma}$ and
$\dim R=\dim \Sigma_R$.
\textbf{Proof of Part \ref{Part5pf} and \ref{Part6pf}. }
   Since $R_{\Sigma_R}$ is isomorphic to $R$,
   it is enough to prove Part \ref{Part5pf}.
   To that end it is enough to show that
   $W_{R_{\Sigma}}=\IM R(\Sigma)$ and $O_{R_{\Sigma}}=\ker O(\Sigma)$, i.e.
   the image of the reachability matrix of $\Sigma$ equals 
   the space $W_{R_{\Sigma}}$ of $R_{\Sigma}$, 
   and the kernel of the observability
   matrix of $\Sigma$ equals $O_{R_{\Sigma}}$.

   Assume that $R_{\Sigma}$ is of the form \eqref{repr:def0}, with
   $\mathcal{X}=\mathbb{R}^n$, $\POWO=p|Q|$ and $X=Q$.
   To see that $\IM R(\Sigma)=W_{R_{\Sigma}}$, 
   notice that $\IM \mathcal{R}(\Sigma)$ is the linear span
   of the columns of matrices $A_{w}B_{q}$ and vectors $A_{w}x_0$, 
   $q \in Q$, $w \in Q^{*}$, $|w| <  n$. But the initial states
   $B$
   of $R_{\Sigma}$ consists of the 
   columns of the matrices $B_q$, $q \in Q$, and of the vector $x_0$.
   Hence, $\IM \mathcal{R}(\Sigma)$ is spanned by vectors
   $A_{w}B_j$, $j \in J_f$ and hence it equals $W_{R_{\Sigma}}$.

  Similarly, the kernel of $O(\Sigma)$ equals the intersection of
  $\ker C_qA_{w}$, $q \in Q$, $w \in Q^{*}$, $|w| < n$.
   It is easy to see that
  $\bigcap_{q \in Q} \ker C_{q}A_{w}=\ker CA_{w}$, hence,
  $\ker O(\Sigma)$ is the intersection of all spaces $\ker CA_{w}$,
  $w \in Q^{*}$, $|w| < n$. But the latter intersection equals $O_{R_{\Sigma}}$.

%

\textbf{Proof of Part \ref{Part7pf}.}
    The proof is analogous to the proof of Lemma 10 of 
    \cite{MP:BigArticlePartI}.
     Since the state-spaces of $R_{\Sigma_1}$ and $\Sigma_1$ 
     are the same, and
     the state-spaces of $R_{\Sigma_2}$ and $\Sigma_2$ are the same, 
     $\MORPH$ can indeed be viewed both as a potential
     representation morphism from $R_{\Sigma_1}$ to $R_{\Sigma_2}$
     and as a potential \BLSS morphism from $\Sigma_1$ to $\Sigma_2$.
     Then it is enough to prove that
     $\MORPH$ satisfies \eqref{repr:morph:eq2} with $R=R_{\Sigma_1}$ and
     $\widetilde{R}=R_{\Sigma_2}$ if and only if $\MORPH$
     satisfies Definition \ref{sect:problem_form:lin:morphism}.
     The latter proof is routine.
      Indeed, 
    assume that  $\Sigma_1$ 
    is of the form (\ref{lin_switch0}) and 
    that $\Sigma_2$ is of the form
       \[ \Sigma_2=(n^{'}, Q, \{ (A_{q}^{'},B_{q}^{'},C_{q}^{'}) \mid q \in Q\},x_{0}^{'}).
	\]
       Assume that $R_{\Sigma_1}$ is of the form \eqref{repr:def0} and
     \( R_{\Sigma_2}=(\mathbb{R}^{n^{'}},
       \{ A_{q}^{'} \}_{q \in Q}, B^{'}, C^{'})
    \) where
    $B^{'}=\{B^{'}_{j} \mid j \in J_{f}\}$.
    Note that the matrices $A_q$ and $A_{q}^{'}$ of $R_{\Sigma_1}$, respectively  $R_{\Sigma_2}$,  coincide with the corresponding matrices 
of $\Sigma_1$ and $\Sigma_2$.
    Then
    $\MORPH$ is a \BLSS morphism if and only if  
   \begin{multline*}
		(\forall q \in Q: \MORPH A_{q}=A^{'}_{q}\MORPH, C_{q}=C_{q}^{'}\MORPH,
       \MORPH B_{q}=B_{q}^{'}) \\ \mbox{ and } \MORPH x_0=x_0^{'}.
   \end{multline*}
    But $\forall q \in Q: C_{q}=C_{q}^{'}\MORPH$ is equivalent to
    $C=C^{'}\MORPH$, since
    \[
     \begin{split}
       C&=\begin{bmatrix} (C_{1})^{T}, & \cdots, & (C_{\QNUM})^{T} \end{bmatrix}^{T} \\
      & =\begin{bmatrix} (C^{'}_{1}\MORPH)^{T}, & \cdots, & (C^{'}_{\QNUM}\MORPH)^{T} \end{bmatrix}^{T}=
       C^{'}\MORPH. 
     \end{split}
   \]
   Similarly, $\MORPH B_{q}=B^{'}_q$ is equivalent to:  
  $\forall l=1,\ldots,m, \MORPH B_q e_l=\MORPH B_{(q,l)}=B^{'}_qe_l=B_{(q,l)}^{'}$. This, together with $\MORPH x_0=x_0^{'}$, implies that
   $\MORPH B_{j}= B^{'}_j$ for all $j \in J_f$.

  Hence, we have established that $\MORPH$ is a \LSS\  morphism if and only if
  $\forall q \in Q: \MORPH A_{q}=A^{'}_{q}\MORPH$,
  $C=C^{'}\MORPH$, and $\forall j \in J_f: \MORPH B_j= B_j^{'}$. 
  But this means that $\MORPH:R_{\Sigma_1} \rightarrow R_{\Sigma_2}$ 
  is a representation morphism.
 \end{proof}
\begin{proof}[Proof Theorem \ref{theo:min}]
  By Theorem \ref{sect:real:theo1}, Part \ref{Part3pf}, 
   $\Sigma$ is a minimal \LSS\ 
  realization of $f$  
  if and only if $R=R_{\Sigma}$ is minimal.
  By Theorem \ref{sect:form:theo1}, $R$ is minimal if and only if  
  $R$ is reachable and observable.
  By Theorem \ref{sect:real:theo1}, Part \ref{Part5pf},
  the latter is equivalent to
  $\Sigma$ being \WR\ and observable.
 Next, we show that minimal \LSS\ realizations of $f$ are isomorphic.
 Let $\Sigma$ and $\hat{\Sigma}$ be two minimal \LSS\ realizations of $f$.
 By Theorem \ref{sect:real:theo1}, Part \ref{Part3pf},
 $R_{\Sigma}$ and $R_{\hat{\Sigma}}$ 
 are minimal representations of $\Psi_{f}$. Then from Theorem
 \ref{sect:form:theo1} it follows that there exists a
  isomorphism  
  $\MORPH:R_{\hat{\Sigma}} \rightarrow R_{\Sigma}$. From
  Part \ref{Part7pf} of Theorem \ref{sect:real:theo1} is
  then follows that
  $\MORPH: \hat{\Sigma} \rightarrow \Sigma$
  is an isomorphism.  Finally, the correctness of
  Procedure \ref{LSSmin} is shown in Remark \ref{LSSmin:pf}.
\end{proof}
  \begin{proof}[Proof of Theorem \ref{sect:real:theo2}]
  \textbf{Necessity} \\
   Assume that $\Sigma$ is a \BLSS which is a realization of $f$.
   Then by Lemma \ref{sect:io:lemma1}, $f$ has a \GCR. 
  Moreover, by Theorem \ref{sect:real:theo1},
   $R_{\Sigma}$ is a representation of $\Psi_{f}$, i.e. $\Psi_{f}$ is 
   rational.
   By Theorem \ref{sect:real:theo1}, Part \ref{Part0pf}, and Theorem \ref{sect:form:theo1}, 
   the latter implies that $\Rank H_{f} < +\infty$.

  \textbf{Sufficiency} \\
   Assume that $f$ has a \GCR\ and $\Rank H_{f} < +\infty$.
   Then by 
   Theorem \ref{sect:real:theo1}, Part \ref{Part0pf}, and Theorem \ref{sect:form:theo1}, 
    $\Psi_{f}$ is rational, i.e. it has a representation $R$. 
    Then 
    by Theorem \ref{sect:real:theo1} the \LSS\ $\Sigma_R$ is a realization
   of $f$, i.e. $f$ has a realization.
   
  Finally, the correctness of Procedure \ref{LSSfromHankel}
  follows from Remark \ref{sect:real:rem1} below.
\end{proof}   

Now we are ready to analyze Procedure \ref{LSSreach},\ref{LSSobs}, \ref{LSSmin} and \ref{LSSfromHankel}.
\begin{Remark}[Correctness of Procedure \ref{LSSreach}]
\label{LSSreach:pf}
 Procedure \ref{LSSreach} is equivalent to the following procedure.
 Apply Procedure
 \ref{repr:constr:reach} 
 to $R_{\Sigma}$ to obtain $R_r$. Then
 $\Sigma_r$ from Procedure \ref{LSSreach} and $\Sigma_{R_r}$
 are isomorphic. It then follows that
 $\Sigma_r$ is span-reachable, since $R_r$ is reachable, and 
 $\Sigma_r$ and $\Sigma$ have the same input-output map, since
 both $R_{\Sigma}$ and $R_r$ are representations of $\Psi_{y_{\Sigma}}$.
\end{Remark}
\begin{Remark}[Correctness of Procedure \ref{LSSobs}]
\label{LSSobs:pf}
 Procedure \ref{LSSobs} is equivalent to the following procedure.
 Apply
Procedure \ref{repr:constr:obs} to $R_{\Sigma}$ to obtain an
 observable representation $R_o$.
  It follows that $\Sigma_o$ from Procedure \ref{LSSobs} and 
  $\Sigma_{R_o}$ are isomorphic. Since $R_o$
 is observable, $\Sigma_o$ is observable as well.  If $\Sigma$ is
 span-reachable, then $R_{\Sigma}$ is reachable. Hence, then $R_o$ is
 reachable and thus $\Sigma_o$ is span-reachable. Finally, both
 $R_{\Sigma}$ and $R_o$ are representations of $\Psi_{y_{\Sigma}}$, 
 from which it follows that the input-output maps of $\Sigma$ and
 $\Sigma_o$ coincide.
\end{Remark}
\begin{Remark}[Correctness of Procedure \ref{LSSmin}]
 \label{LSSmin:pf}
 Procedure \ref{LSSmin} can be restated as follows. 
 Apply Procedure \ref{repr:constr:can} to $R_{\Sigma}$ and denote
 the resulting minimal representation by $R_m$. It then follows
 that $\Sigma_m$ from Procedure \ref{LSSmin} is
 isomorphic to $\Sigma_{R_m}$. Since
 by Theorem \ref{sect:real:theo1} $\Sigma_{R_m}$ is a minimal realization of
 $y_{\Sigma}$, then so is $\Sigma_m$.
\end{Remark}
 \begin{Remark}[Correctness of Procedure \ref{LSSfromHankel}]
 \label{sect:real:rem1}
 Procedure  \ref{LSSfromHankel} can be reformulated as follows.
  Use
  Procedure \ref{reprfromhankel}, to construct a 
  minimal representation $R$ of $\Psi_{f}$ from $H_{f}=H_{\Psi_{f}}$.
  Then by
  Theorem \ref{sect:real:theo1}, $\Sigma_{R}$ will be a 
  minimal realization of $f$. It is easy to see that
  the \BLSS $\Sigma_f$ from Procedure \ref{LSSfromHankel}
  is isomorphic to $\Sigma_R$.
 \end{Remark}
We will continue with the proof of Theorem \ref{part_real_lin:theo1}.
\begin{proof}[Proof of Theorem \ref{part_real_lin:theo1}]
 The proof is almost the same as that of
 the continuous-time case, described in \cite{MP:PartReal}.
 From Theorem \ref{sect:real:theo1} it follows that 
 $H_{f,K,L}$ coincides with
 $H_{\Psi_f,K,L}$, and
hence, $\Rank H_{f,N,N}=\Rank H_{f}$ is equivalent to
 $\Rank H_{\Psi_f,N,N}=\Rank H_{\Psi_f}$.

Assume now that $\Rank H_{f,N,N}=\Rank H_{f}$.
Then
the representation $R_{N}$ from Theorem \ref{part_real_pow:theo} 
is well-defined and it is a minimal
representation of $\Psi_{f}$.
Consider Algorithm \ref{alg0} and the decomposition
defined there. 
Then $\IM H_{f,N,N+1}=\IM \mathbf{O}$ and
there exists a left inverse
$\mathbf{O}^{+} \in \mathbb{R}^{n \times I_N}$ of $\mathbf{O}$ 
such that $\mathbf{O}^{+}\mathbf{O}=I_n$.

 Consider the linear map
$\MORPH:\IM H_{f,N,N+1} \rightarrow \mathbb{R}^n$, where
$\MORPH(x)=\mathbf{O}^{+}x$ for all $x \in \IM H_{f,N,N+1}$
and recall that $H_{f,N,N+1}=H_{\Psi_f,N,N+1}$.
It then follows
that $\MORPH$ is a linear isomorphism, and its inverse is $\mathbf{O}$.
Moreover, the isomorphic copy 
\[ \MORPH R_N = (\mathbb{R}^{n}, \{ \MORPH A_{q} \MORPH^{-1}\}_{q \in Q}, \{ \MORPH(B_j) \mid j \in J_{f}\}, C\MORPH^{-1}) \]
of $R_N$ is also a minimal representation 
of $\Psi_{f}$.

Consider now the \LSS\  
$\Sigma_{\MORPH R_{N}}$ associated with $\MORPH R_{N}$.
It is easy to see that the \LSS\ 
$\Sigma_{\MORPH R_{N}}$ 
satisfies (\ref{alg0:eq-2}-\ref{alg0:eq1}) and hence
it coincides with the \LSS\ $\Sigma_N$ returned by Algorithm \ref{alg0}. 
Theorem \ref{sect:real:theo1}
it follows then that $\Sigma_N$ is a minimal realization of $f$.

Assume that there exists an \LSS\ realization $\Sigma$ of $f$, such that $\dim \Sigma \le N+1$.  Then by Theorem \ref{sect:real:theo2}, $\Rank H_{f}=\Rank H_{\Psi_f} \le \dim \Sigma \le N+1$.  Hence, by Theorem \ref{part_real_pow:theo}, $\Rank H_{\Psi_{f}}=\Rank H_{\Psi_{f},N,N}$.
\end{proof}
We conclude this section with the following remark.
\begin{Remark}[Continuous-time case]
 If instead of a discrete-time system we consider a 
 continuous-time system $\Sigma$, 
 then the constructions of $R_{\Sigma}$ and $\Sigma_R$
 are exactly the same. The construction of $\Psi_f$ differs only in
 the way the Markov-parameters $S_{j}^f(q_0vq)$ and
 $S_{0}^f(vq)$, $v\in Q^*$, $q,q_0\in Q$, $j=1,\ldots,m$, are derived from the input-output map $f$. However,
 $S_{0}^f(vq)=C_qA_vx_0$ and $S_{j}^f(q_0vq)=C_qA_vB_{q_0}e_j$
 also holds for the continuous-time case, if $\Sigma$ is a realization
 of  $f$. A detailed description of the continuous-time case
 can be found in \cite{MP:BigArticlePartI,MP:RealForm}.
\end{Remark}
\begin{table}[h!]
\centering
\begin{tabular}{|ccc|}
\hline
Realization of $f$ & \vline & Representation of $\Psi_f$ \\
\hline
$\Sigma=\Sigma_{R_\Sigma}$  & $\Longleftrightarrow$ & $R_\Sigma$\\
$\Sigma_R$  & $\Longleftrightarrow$ & $R=R_{\Sigma_R}$ \\
observable, span-reachable & $\Longleftrightarrow$ & observable, reachable\\
minimal & $\Longleftrightarrow$ & minimal\\
$\MORPH$, \LSS morphism & $\Longleftrightarrow$ & $\MORPH$, representation morphism\\
\hline
\end{tabular}
\caption{Correspondence between \BSLSS  and representations}
\label{tab:Summary}
\end{table}

\section{Conclusions}
  We presented realization theory for discrete-time linear
  switched systems. The results and the proof techniques 
  resemble the ones for continuous-time linear switched systems presented in our previous work. 

\appendix
\section{Technical proofs}
\label{appA}
\begin{proof}[Proof of Lemma \ref{sect:io:lemma1}]
 Consider the input-output map $y_{\Sigma}$ of $\Sigma$.
 By induction on $t$, it follows that if
 $w=(v,u) \in \HYBINP^{+}$, 
 $v=q_0\cdots q_t$, $u=u_0\cdots u_t$, $t \ge 0$,
 $q_0,\ldots,q_t \in Q$, $u_0,\ldots, u_t \in \mathbb{R}^m$,
 then 
 \begin{equation}
 \label{sect:io:lemma1:pf:eq1}
   \begin{split}
     & y_{\Sigma}(w)=C_{q_t}A_{v_{0|t-1}}x_0+  
    \sum_{j=0}^{t-1} C_{q_t}A_{v_{j+1|t-1}}B_{q_j}u_{j}.
   \end{split}
 \end{equation}
 Consider  the Markov-parameters $S_{0}^{y_{\Sigma}}(sq)$,
 $S^{y_{\Sigma}}_{j}(q_0sq)$, $q,q_0 \in Q$, $s \in Q^{*}$,
 $j=1,\ldots,m$, of $y_{\Sigma}$.
 It then follows from \eqref{sect:io:lemma1:pf:eq1} and the 
 definition of Markov-parameters that for all $s \in Q^{*}$,
 \begin{equation}
 \label{sect:io:lemma1:pf:eq2}
    S^{y_{\Sigma}}_{0}(sq)=C_qA_sx_0 \mbox{ and } 
   S^{y_{\Sigma}}_{j}(q_0sq)=C_qA_sB_{q_0}e_j.  
 \end{equation}
 Notice that \eqref{sect:io:lemma1:pf:eq1} -- \eqref{sect:io:lemma1:pf:eq2}
 implies that $y_{\Sigma}$ has a generalized convolution representation.

 Assume that $\Sigma$ is a realization of $f$. Then $y_{\Sigma}=f$. 
 Then from \eqref{sect:io:lemma1:pf:eq1}--\eqref{sect:io:lemma1:pf:eq2} it follows
 that $f$ has a generalized convolution representation and 
 \eqref{sect:io:lemma1:eq1} holds. 
 Conversely, assume that $f$ has a generalized convolution representation and
 that \eqref{sect:io:lemma1:eq1} holds. From \eqref{sect:io:lemma1:eq1} 
it follows that
 the Markov-parameters of $y_{\Sigma}$ and $f$ coincide, i.e.
 $S^{y_{\Sigma}}_0(sq)=S^f_0(sq)$ and $S^{y_{\Sigma}}_{j}(q_0 s q)=S^f_{j}(q_0 s q)$ for all $s \in Q^{*}$,
 $q,q_0 \in Q$, $j=1,\ldots,m$. Since both $y_{\Sigma}$ and $f$ admit a 
 generalized convolution representation, by Remark \ref{rem:Unicity-of-from-MP} 
they are equal. The latter means that $\Sigma$ is a realization of $f$.
\end{proof}
\begin{proof}[Proof of Theorem \ref{sect:real:lemma1}]
  It is enough to show that for any family of $n \times n$
  matrices $F_q$, $q \in Q$ and any matrix $G \in \mathbb{R}^{n \times l}$ for some $l > 0$ the following holds. Define the matrix
  \( \mathcal{R}_k=\begin{bmatrix} 
      F_{v_1}G & \ldots & F_{v_{M_{k+1}}}G \end{bmatrix}
  \) for $k \in \mathbb{N}$.
  That is, $\mathcal{R}_k$ is the span of the column vectors 
  of $F_vG$, $v \in Q^{<k+1}$.
  Here we applied Notation \ref{repr:not1} to $F_q, q \in Q$ to obtain
  the matrices $F_v, v \in Q^{*}$.
  Define the subspace $\mathcal{I}$ as the space
  spanned by the column vectors of the matrices $F_vG$, $v \in Q^{*}$.
  If we can show that $\IM \mathcal{R}_{n-1}=\mathcal{I}$, then the
  statement of the theorem follows easily. 

  Indeed, 
  it is easy to see that the linear span of all reachable states
  of $\Sigma$ equals $\mathcal{I}$, if we set $F_q=A_q$, $q \in Q$ and
  $G=\widetilde{B}$. Moreover, in this 
  case $\mathcal{R}_{n-1}=\mathcal{R}(\Sigma)$.
  Hence, 
  $\Rank \mathcal{R}(\Sigma)=n$ is equivalent to $\mathcal{I}=\mathbb{R}^{n}$, which in turn is equivalent to span-reachability of $\Sigma$.
  Similarly, if we set $F_q=A_q^T$ and $G=\widetilde{C}^T$, then
  $\mathcal{O}(\Sigma)^T=\mathcal{R}_{n-1}$ and 
  $\mathcal{I}$ is the orthogonal complement of
  $\bigcap_{v \in Q^{*}} \ker \widetilde{C}A_{v}$. 
  From \cite{Sun:Book} it follows that $\Sigma$ is observable if and only if 
  $\bigcap_{v \in Q^{*}} \ker \widetilde{C}A_{v}=\{0\}$, which is
  equivalent to $\IM \mathcal{R}_{n-1}=\mathcal{I}=\mathbb{R}^n$. 
  The latter is equivalent to $\Rank \mathcal{O}(\Sigma)=n$.

 We proceed to show $\mathcal{I}=\IM \mathcal{R}_n$. The proof
 is the same as the one of an analogous statement for
 rational representations or state-affine systems \cite{MP:Phd,Son:Real}. We repeat it for the sake of completeness.
 It is easy to see that $\IM \mathcal{R}_k \subseteq \mathcal{I}$ for all $k \in \mathbb{N}$ and
 $\IM \mathcal{R}_{k} \subseteq \IM \mathcal{R}_{k+1}$. By a 
 dimensionality argument it follows that
 there exist $0 \le k_{*} \le n-1$, such that
 $\IM \mathcal{R}_{k_{*}}=\IM \mathcal{R}_{k_{*}+1}$. From this, by noticing that 
 $\IM \mathcal{R}_{k+1}=\IM G + \sum_{q \in Q} \IM F_{q}\mathcal{R}_k$, it follows that
 $\mathcal{I}=\IM \mathcal{R}_{k_{*}}$. Since $\IM \mathcal{R}_{k_{*}} \subseteq \IM \mathcal{R}_{n-1}$, we then obtain that 
 $\IM \mathcal{R}_{n-1}=\mathcal{I}$.

\end{proof}

\begin{thebibliography}{10}

\bibitem{Bako08-IJC}
L.~Bako, G.~Mercère, and S.~Lecoeuche.
\newblock Online structured subspace identification with application to
  switched linear systems.
\newblock {\em International Journal of Control}, 82:1496--1515, 2009.

\bibitem{Bako09-SYSID2}
L.~Bako, G.~Mercère, R.~Vidal, and S.~Lecoeuche.
\newblock Identification of switched linear state space models without minimum
  dwell time.
\newblock In {\em IFAC Symposium on System Identification, Saint Malo, France},
  2009.

\bibitem{ball:1474}
J.A. Ball, G.~Groenewald, and T.~Malakorn.
\newblock Structured noncommutative multidimensional linear systems.
\newblock {\em SIAM J. on Control and Optimization}, 44(4):1474--1528, 2005.

\bibitem{Paoletti1}
A.~Bemporad, A.~Garulli, S.~Paoletti, and A.~Vicino.
\newblock A bounded-error approach to piecewise affine system identification.
\newblock {\em IEEE Transactions on Automatic Control}, 50(10):1567--1580,
  2005.

\bibitem{Reut:Book}
J.~Berstel and C.~Reutenauer.
\newblock {\em Rational series and Their Languages}.
\newblock Springer-Verlag, 1984.

\bibitem{SchuColObs}
P.~Collins and J.~H. van Schuppen.
\newblock Observability of piecewise-affine hybrid systems.
\newblock In {\em Hybrid Systems: Computation and Control}, 2004.

\bibitem{AutoEilen}
Samuel Eilenberg.
\newblock {\em Automata, Languages and Machines}.
\newblock Academic Press, New York, London, 1974.

\bibitem{Ferrari03}
G.~Ferrari-Trecate, M.~Muselli, D.~Liberati, and M.~Morari.
\newblock A clustering technique for the identification of piecewise affine
  systems.
\newblock {\em Automatica}, 39:205--217, 2003.

\bibitem{MFliessHank}
M.~Fliess.
\newblock Matrices de hankel.
\newblock {\em J. Math. Pures Appl.}, (23):197 -- 224, 1973.

\bibitem{GecsPeak}
F.~G\'ecseg and I~Pe\'ak.
\newblock {\em Algebraic theory of automata}.
\newblock Akad\'emiai Kiad\'o, Budapest, 1972.

\bibitem{GrossmanHybAlg}
R.L. Grossman and R.G. Larson.
\newblock An algebraic approach to hybrid systems.
\newblock {\em Theoretical Computer Science}, 138:101--112, 1995.

\bibitem{isi:tac}
A.~Isidori.
\newblock Direct construction of minimal bilinear realizations from nonlinear
  input-output maps.
\newblock {\em IEEE Transactions on Automatic Control}, pages 626--631, 1973.

\bibitem{JacobAlg1}
Nathan Jacobson.
\newblock {\em Lectures in Abstract Algebra}, volume II: linear algebra.
\newblock D. van Nostrand Company, Inc. New York, 1953.

\bibitem{Juloski05-TAC}
A.~Lj. Juloski, S.~Weiland, and W.P.M.H. Heemels.
\newblock A bayesian approach to identification of hybrid systems.
\newblock {\em IEEE Transactions on Automatic Control}, 50:1520--1533, 2005.

\bibitem{Nakada05}
H.~Nakada, K.~Takaba, and T.~Katayama.
\newblock Identification of piecewise affine systems based on statistical
  clustering technique.
\newblock {\em Automatica}, 41:905--913, 2005.

\bibitem{IdentHybTut}
S.~Paoletti, A.~Juloski, G.~Ferrari-Trecate, and R.~Vidal.
\newblock Identification of hybrid systems: A tutorial.
\newblock {\em European Journal of Control}, 13(2-3):242 -- 260, 2007.

\bibitem{Paoletti2}
S.~Paoletti, J.~Roll, A.~Garulli, and A.~Vicino.
\newblock Input/ouput realization of piecewise affine state space models.
\newblock In {\em 46th IEEE Conf. on Dec. and Control}, 2007.

\bibitem{MP:Phd}
M.~Petreczky.
\newblock {\em Realization Theory of Hybrid Systems}.
\newblock PhD thesis, Vrije Universiteit, Amsterdam, 2006.

\bibitem{MP:RealForm}
M.~Petreczky.
\newblock Realization theory for linear switched systems: Formal power series
  approach.
\newblock {\em Systems \& Control Letters}, 56:588--595, 2007.

\bibitem{MP:BigArticlePartI}
M.~Petreczky.
\newblock Realization theory of linear and bilinear switched systems: A formal
  power series approach: Part i.
\newblock {\em ESAIM Control, Optimization and Calculus of Variations}, 2010.
\newblock DOI 10.1051/cocv/2010014.

\bibitem{MP:HSCC2010}
M.~Petreczky, L.~Bako, and J.H. van Schuppen.
\newblock Identifiability of discrete-time linear switched systems.
\newblock In {\em Hybrid Systems: Computation and Control}. ACM, 2010.

\bibitem{Petreczky09-TAC}
M.~Petreczky and J.H. van Schuppen.
\newblock Realization theory for linear hybrid systems.
\newblock {\em IEEE Trans. on Automatic Control}, 55(10):2282 -- 2297, 2010.

\bibitem{MPLB:Pers}
Mihaly Petreczky and Laurent Bako.
\newblock On the notion of persistence of excitation for linear switched
  systems.
\newblock In {\em 50th IEEE Conference on Decision and Control}, 2011.

\bibitem{MP:PartTechRep}
Mihaly Petreczky and Jan~H. van Schuppen.
\newblock Partial-realization of linear switched systems: A formal power series
  approach.
\newblock Technical Report arXiv:1010.5160v1, ArXiv, 2010.
\newblock Available at http://arxiv.org/abs/1010.5160v1.

\bibitem{MP:PartReal}
Mihaly Petreczky and Jan~H. van Schuppen.
\newblock Partial-realization of linear switched systems: A formal power series
  approach.
\newblock {\em Automatica}, 47:1277--2184, 2011.

\bibitem{MPRV:HSCC08}
Mihaly Petreczky and Ren\'e Vidal.
\newblock Realization theory for semi-algebraic hybrid systems.
\newblock In {\em Hybrid Systems: Computation and Control}, pages 386--400,
  2008.

\bibitem{Roll04}
J.~Roll, A.~Bemporad, and L.~Ljung.
\newblock Identification of piecewise affine systems via mixed-integer
  programming.
\newblock {\em Automatica}, 40:37--50, 2004.

\bibitem{Son:Resp}
Eduardo~D. Sontag.
\newblock {\em Polynomial Response Maps}, volume~13 of {\em Lecture Notes in
  Control and Information Sciences}.
\newblock Springer Verlag, 1979.

\bibitem{Son:Real}
Eduardo~D. Sontag.
\newblock Realization theory of discrete-time nonlinear systems: Part {I} --
  the bounded case.
\newblock {\em IEEE Trans. on Circuits and Systems}, CAS-26(4), 1979.

\bibitem{Sun:Book}
Zhendong Sun and Shuzhi~S. Ge.
\newblock {\em Switched linear systems : control and design}.
\newblock Springer, London, 2005.

\bibitem{Hof}
J.M. van~den Hof.
\newblock {\em System theory and system identification of compartmental
  systems}.
\newblock PhD thesis, University of Groningen, 1996.

\bibitem{Verdult04}
V.~Verdult and M.~Verhaegen.
\newblock Subspace identification of piecewise linear systems.
\newblock In {\em Proc.Conf. Decision and Control}, 2004.

\bibitem{VidalAutomatica}
R.~Vidal.
\newblock Recursive identification of switched {ARX} systems.
\newblock {\em Automatica}, 44(9):2274 -- 2287, 2008.

\bibitem{JLSVidal}
R.~Vidal, A.~Chiuso, and S.~Sastry.
\newblock Observability and identifiability of jump linear systems.
\newblock In {\em Proc. IEEE Conf. Dec. and Control}, pages 3614 -- 3619, 2002.

\bibitem{Weiland06}
S.~Weiland, A.~Lj. Juloski, and B.~Vet.
\newblock On the equivalence of switched affine models and switched {ARX}
  models.
\newblock In {\em 45th IEEE Conf. on Decision and Control}, 2006.

\end{thebibliography}
\end{document}